\newtheorem{thm}{Theorem}[section]
\newtheorem{cor}[thm]{Corollary}
\newtheorem{prop}[thm]{Proposition}
\newtheorem{lem}[thm]{Lemma}
\theoremstyle{definition}
\newtheorem{defn}[thm]{Definition}
\newtheorem{exmp}[thm]{Example}
\newtheorem{notn}[thm]{Notation}
\theoremstyle{remark}
\newtheorem{rmk}[thm]{Remark}
\let\c@equation\c@thm
\newcommand{\cA}{\mathcal{A}}
\newcommand{\bbB}{\mathbb{B}}
\newcommand{\cB}{\mathcal{B}}
\newcommand{\bC}{\mathbf{C}}
\newcommand{\bbC}{\mathbb{C}}
\newcommand{\bbD}{\mathbb{D}}
\newcommand{\bF}{\mathbf{F}}
\newcommand{\bK}{\mathbf{K}}
\newcommand{\cK}{\mathcal{K}}
\newcommand{\bbL}{\mathbb{L}}
\newcommand{\bQ}{\mathbf{Q}}
\newcommand{\bR}{\mathbf{R}}
\newcommand{\bbR}{\mathbb{R}}
\newcommand{\cU}{\mathcal{U}}
\newcommand{\bbZ}{\mathbb{Z}}
\newcommand{\fg}{\mathfrak{g}}
\newcommand{\fp}{\mathfrak{p}}
\newcommand{\ft}{\mathfrak{t}}
\newcommand{\dsum}{\bigoplus}
\newcommand{\GL}{\mathbf{GL}}
\newcommand{\gl}{\mathfrak{gl}}
\newcommand{\Hom}{\operatorname{Hom}}
\newcommand{\Sym}{\operatorname{Sym}}
\newcommand{\Tor}{\operatorname{Tor}}
\newcommand{\Ind}{\operatorname{Ind}}
\newcommand{\DS}{\displaystyle}
\newcommand{\RN}[1]{%
  \textup{\uppercase\expandafter{\romannumeral#1}}%
  }
\numberwithin{equation}{section}
\newcommand{\arxiv}[1]{\href{http://arxiv.org/abs/#1}{{\tt arXiv:#1}}}
\newdimen\unitsize\setlength{\unitsize}{0.25cm}
\newif\ifnumbered
\newcommand\rawpath[1]
\def\st{+1}
\def\st{+1}
\def\st{-1}
\newcommand{\smcdot}{{\textup{$\cdot$}}}
\title{Syzygies of Determinantal Thickenings}
\author{Hang Huang}
\date{January 2019}
\begin{document}

\begin{abstract}
Let $S = \bbC[x_{i,j}]$ be the ring of polynomial functions on the space of $m \times n$ matrices, and consider the action of the group $\GL = \GL_m \times \GL_n$ via row and column operations on the matrix entries. It is proven in \cite{RW} that for a $\GL$-invariant ideal $I \subseteq S$, the linear strands of its minimal free resolution translates via the BGG correspondence to modules over the general linear Lie superalgebra $\gl(m|n)$. When $I=I_{\lambda}$ is the ideal generated by the $\GL$-orbit of a highest weight vector of weight $\lambda$, they gave a conjectural description of the classes of these $\gl(m|n)$-modules in the Grothendieck group. We prove their conjecture here. We also give a algorithmic description of how to get the classes of these $\gl(m|n)$-modules for any $\GL$-invariant ideal $I \subseteq S$. 
\end{abstract}

\maketitle

\section{Introduction} \label{Introduction}

Consider the vector space $\bbC^{m \times n}$ of $m \times n$ complex matrices ($m \geq n$). Let $S = \bbC[x_{i,j}]$ be its coordinate ring. The group $\GL = \GL_m(\bbC) \times \GL_n(\bbC)$ acts on $\bbC^{m \times n}$ via row and column operations. This makes $S$ into a $\GL$-representation. We can use Cauchy's formula to decompose $S$ into irreducible $\GL$-representations as follows. Write $S_{\lambda}$ for the \emph{Schur functor} associated to a partition $\lambda$ and use \cite[Corollary 2.3.3]{Wey03}, we have that
\begin{align} \label{Cauchy}
\DS S = \dsum_{l(\lambda) \leq n} S_{\lambda} \bbC^m \otimes S_{\lambda} \bbC^n.
\end{align}
When $I \subseteq S$ is a $\GL$-invariant ideal, the \emph{syzygy modules} $\Tor_i^S (I,\bbC)$ are naturally $\GL$-representations. But their explicit description is only known in special cases \cite{Las78} \cite{ABW81} \cite{PW85} \cite{RW17}. 

We can translate the problem of computing syzygies into one modules over the exterior algebra via the BGG correspondence (described in Section ~\ref{subsec:BGG}). In \cite{RW}, Raicu and Weyman have related this to the representation theory of the general linear Lie superalgebra $\gl(m|n)$. In particular, they proved that the linear stands of the minimal free resolution of a $\GL$-invariant ideal translate via the BGG correspondence to finite length $\gl(m|n)$-modules. We consider $I_{\lambda}$ which is generated by a single summand $S_{\lambda} \bbC^m \otimes S_{\lambda} \bbC^n$ in \ref{Cauchy}. This is called a \emph{principal $\GL$-invariant ideal} since it is generated by the $\GL$-orbit of a single highest weight vector. In the same paper \cite{RW}, Raicu and Weyman gave a conjectural description of $\Tor_i^S(I_{\lambda},\bbC)$ for any partition $\lambda$. They have verified their conjecture there for $\lambda = (a^b)$ any rectangular shaped partition by comparing the conjecture with their result in \cite{RW17}. We will prove their conjecture in this paper. We will also give a description of $\Tor_i^S(I,\bbC)$ for any $\GL$-invariant ideal $I \subseteq S$ using the combinatorics of Dyck paths (discussed in Section ~\ref{subsection:partition+Dyck}) and $\gl(m|n)$-representations (discussed in Section ~\ref{subsection:LieSuperalgebra}). The strategy we used here is to put a filtration on the complex of $\gl(m|n)$-modules obtained via the BGG correspondence which encodes the linear strands of the minimal free resolution of $I$. We study the corresponding spectral sequence carefully and prove that it degenerates on the second page in Proposition ~\ref{prop:degenerate}. In most of the proof we will assume $I$ is a principal $\GL$-invariant ideal just for the simplicity of notations. All the arguments remain the same for a general $\GL$-invariant ideal. 

The article is organized as follows. In Section ~\ref{section:Preliminary} we give some background on the combinatorics of partitions and Dyck paths, discuss some basic aspects of the representation theory of general linear Lie algebras and superalgebras, and recall the statement of the BGG correspondence. This section overlaps a lot with Section $2$ in \cite{RW}. In Section ~\ref{section:MainProof} we construct the spectral sequence and prove the main conjecture in \cite{RW}.

\subsection*{Acknowledgements.} 

The author would like to send thanks to Steven Sam for bringing a question which leads to this paper. The author would also like to send special thanks to him for all the fruitful conversations during the writing and conception of this work. The author is also very grateful to Claudiu Raicu and Jerzy Weyman for helpful and inspiring conversations and insightful comments of the early versions of this paper.

\section{Preliminaries and Notations} \label{section:Preliminary}
\subsection{Partitions and Dyck paths.} \label{subsection:partition+Dyck}
Look at a partition $\lambda = (\lambda_1 \geq \lambda_2 \geq \ldots \geq \lambda_n \geq 0)$. We denote $l(\lambda)$ to be the largest $i$ such that $\lambda_i \neq 0$. The size of $\lambda$ is denoted to be $|\lambda| = \lambda_1 + \lambda_2 + \ldots + \lambda_n$. We often omit trailing zeros. For instance, we write $(4,3,1,1)$ for the partition $(4,3,1,1,0,0,0)$. The partial ordering we put on partitions here is that
\begin{align} \label{def:partialordering}
    \lambda \leq \mu  \iff  \lambda_i \leq \mu_i \text{ for all } i=1,2,\ldots,n. 
\end{align}
We will also identify a partition with its associated \emph{Young diagram} as follows. Consider the $2$-dimensional grid induced by the inclusion of $\bbZ^2 \subset \bbR^2$, and index each box in the grid by the coordinates $(x,y)$ of its upper right corner. We identify each partition $\lambda$ with the collection of boxes
\begin{align} \label{def:partition}
    \lambda = \{(i,j) \mid 1 \leq j \leq n, 1 \leq i \leq \lambda_i  \}.
\end{align}
A box $(\lambda_p,p)$ is called a \emph{corner} of the partition $\lambda$ if $\lambda_p > \lambda_{p+1}$. For example, the partition $\lambda = (4,3,1,1)$ has corners $(4,1),(3,2)$ and $(1,4)$ as is pictured as follows:
\begin{center}
\begin{tikzpicture}[x=\unitsize,y=\unitsize,baseline=0]
\tikzset{vertex/.style={}}%
\tikzset{edge/.style={very thick}}%
\draw[dotted] (0,0) -- (14,0);
\draw[dotted] (0,2) -- (14,2);
\draw[dotted] (0,4) -- (14,4);
\draw[dotted] (0,6) -- (14,6);
\draw[dotted] (0,8) -- (14,8);
\draw[dotted] (0,10) -- (14,10);
\draw[dotted] (2,-2) -- (2,12);
\draw[dotted] (4,-2) -- (4,12);
\draw[dotted] (6,-2) -- (6,12);
\draw[dotted] (8,-2) -- (8,12);
\draw[dotted] (10,-2) -- (10,12);
\draw[dotted] (12,-2) -- (12,12);
\draw[edge] (2,0) -- (10,0);
\draw[edge] (2,2) -- (10,2);
\draw[edge] (2,4) -- (8,4);
\draw[edge] (2,6) -- (4,6);
\draw[edge] (2,8) -- (4,8);
\draw[edge] (2,0) -- (2,8);
\draw[edge] (4,0) -- (4,8);
\draw[edge] (6,0) -- (6,4);
\draw[edge] (8,0) -- (8,4);
\draw[edge] (10,0) -- (10,2);
\end{tikzpicture}%
\end{center}
If a partition $\lambda = (\lambda_1,\lambda_2,\ldots,\lambda_n)$ has repetitions we use exponential notation. For example, if $\lambda = (3,3,3,1,1,0,0,0)$, then we write $\lambda = (3^3,1^2)$.

A \emph{path} $P$ is a collection of boxes
\begin{align} \label{def:path}
    P = \{ (x_1,y_1),(x_2,y_2),\ldots,(x_k,y_k) \}
\end{align}
satisfying the condition that for each $i = 1,\ldots,k-1$, we have that either
\begin{equation} \label{def:steps}
    (x_{i+1},y_{i+1}) = (x_i + 1,y_i) \text{ or } (x_{i+1},y_{i+1}) = (x_i,y_i-1).
\end{equation}
The \emph{length} of the path $P$ is the number of boxes it contains, namely $k$, and is denoted by $|P|$. A \emph{corner} of $P$ is a box $(x_i,y_i)$ with $1 < i < k$ and $x_{i+1} - x_{i-1} = 1 = y_{i+1} - y_{i-1}$. It is an \emph{inner corner} if $x_{i-1} = x_i$, and an \emph{outer corner} if $y_{i-1} = y_i$. We say that the path $P$ is a \emph{Dyck path of level $d$} if in addition it satisfies
\begin{itemize}
    \item $x_1 + y_1 = x_k + y_k = d$.
    \item $x_i + y_i \geq d$ for all $i = 1,\ldots,k$.
\end{itemize}
Note that the condition $x_1 + y_1 = x_k + y_k = d$ and ~\ref{def:steps} forces the length of a Dyck path to always be an odd number. We illustrate a path $P$ by drawing a broken line segment joining the centers of the squares it contains:

\begin{minipage}{.5\textwidth}
\centering
\begin{tikzpicture}[x=\unitsize,y=\unitsize,baseline=0]
\tikzset{vertex/.style={}}%
\tikzset{edge/.style={very thick}}%
\draw[dotted] (0,0) -- (20,0);
\draw[dotted] (0,2) -- (20,2);
\draw[dotted] (0,4) -- (20,4);
\draw[dotted] (0,6) -- (20,6);
\draw[dotted] (0,8) -- (20,8);
\draw[dotted] (0,10) -- (20,10);
\draw[dotted] (0,12) -- (20,12);
\draw[dotted] (2,-2) -- (2,14);
\draw[dotted] (4,-2) -- (4,14);
\draw[dotted] (6,-2) -- (6,14);
\draw[dotted] (8,-2) -- (8,14);
\draw[dotted] (10,-2) -- (10,14);
\draw[dotted] (12,-2) -- (12,14);
\draw[dotted] (14,-2) -- (14,14);
\draw[dotted] (16,-2) -- (16,14);
\draw[dotted] (18,-2) -- (18,14);
\draw[dotted] (3,13) -- (17,-1);
\draw[red, line width=5pt] (5,11) -- (9,11) -- (9,9) -- (11,9) -- (11,5) -- (13,5) -- (13,3) ;
\end{tikzpicture}
\captionsetup{labelformat=empty,justification=centering}

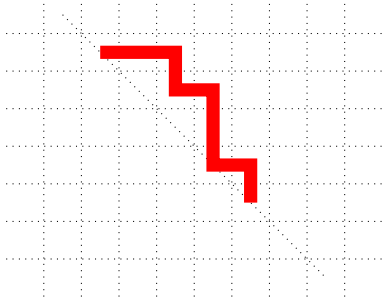
\captionof{figure}{A Dyck path of length $9$, with two inner and three outer corners}
\end{minipage}
\begin{minipage}{.5\textwidth}
\centering
\begin{tikzpicture}[x=\unitsize,y=\unitsize,baseline=0]
\tikzset{vertex/.style={}}%
\tikzset{edge/.style={very thick}}%
\draw[dotted] (0,0) -- (20,0);
\draw[dotted] (0,2) -- (20,2);
\draw[dotted] (0,4) -- (20,4);
\draw[dotted] (0,6) -- (20,6);
\draw[dotted] (0,8) -- (20,8);
\draw[dotted] (0,10) -- (20,10);
\draw[dotted] (0,12) -- (20,12);
\draw[dotted] (2,-2) -- (2,14);
\draw[dotted] (4,-2) -- (4,14);
\draw[dotted] (6,-2) -- (6,14);
\draw[dotted] (8,-2) -- (8,14);
\draw[dotted] (10,-2) -- (10,14);
\draw[dotted] (12,-2) -- (12,14);
\draw[dotted] (14,-2) -- (14,14);
\draw[dotted] (16,-2) -- (16,14);
\draw[dotted] (18,-2) -- (18,14);
\draw[dotted] (3,13) -- (17,-1);
\draw[red, line width=5pt] (5,11) -- (9,11) -- (9,3) -- (15,3) -- (15,1);
\end{tikzpicture}
\captionsetup{labelformat=empty,justification=centering}

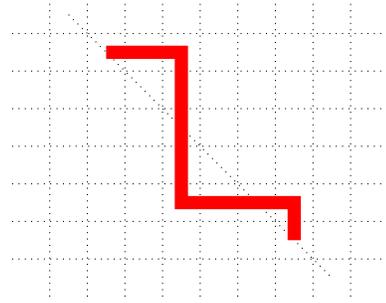
\captionof{figure}{A non-Dyck path of length $11$, with one inner and two outer corners}
\end{minipage}

An \emph{augmented Dyck path} is a pair $\tilde{P} = (P,B)$ where $P$ is a Dyck path and $B$ is a set of boxes, called the \emph{bullets} in $\tilde{P}$, which can be partitioned as $B = B_{\text{head}} \sqcup B_{\text{tail}}$ where (if $P$ is as in \ref{def:path} then)
\begin{equation}
\begin{aligned}
& B_{\text{head}} = \{ (x_1 - a,y_1),(x_1-a+1,y_1),\ldots,(x_1-1,y_1) \} \text{ for some } a \geq 0, \text{ and } \\
& B_{\text{tail}} = \{ (x_k,y_k-1),(x_k,y_k-2),\ldots,(x_k,y_k-b)) \} \text{ for some } b \geq 0.
\end{aligned}
\end{equation}
The length of $\tilde{P}$ is $|\tilde{P}| = |P| + a + b$. To illustrate the augmented Dyck path $\tilde{P}$ we draw $P$ as before, and draw small disks in the center of each of the additional $a+b$ boxes from $B$:
\begin{center}
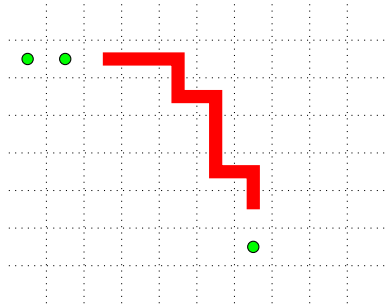

\begin{tikzpicture}[x=\unitsize,y=\unitsize,baseline=0]
\tikzset{vertex/.style={}}%
\tikzset{edge/.style={very thick}}%
\draw[dotted] (0,0) -- (20,0);
\draw[dotted] (0,2) -- (20,2);
\draw[dotted] (0,4) -- (20,4);
\draw[dotted] (0,6) -- (20,6);
\draw[dotted] (0,8) -- (20,8);
\draw[dotted] (0,10) -- (20,10);
\draw[dotted] (0,12) -- (20,12);
\draw[dotted] (2,-2) -- (2,14);
\draw[dotted] (4,-2) -- (4,14);
\draw[dotted] (6,-2) -- (6,14);
\draw[dotted] (8,-2) -- (8,14);
\draw[dotted] (10,-2) -- (10,14);
\draw[dotted] (12,-2) -- (12,14);
\draw[dotted] (14,-2) -- (14,14);
\draw[dotted] (16,-2) -- (16,14);
\draw[dotted] (18,-2) -- (18,14);
\draw[red, line width=5pt] (5,11) -- (9,11) -- (9,9) -- (11,9) -- (11,5) -- (13,5) -- (13,3) ;
\draw[fill=green] (13,1) circle [radius=0.3] ;
\draw[fill=green] (3,11) circle [radius=0.3] ;
\draw[fill=green] (1,11) circle [radius=0.3] ;
\end{tikzpicture}
\captionsetup{labelformat=empty}
\captionof{figure}{An augmented Dyck path of length $12$}
\end{center}

An \emph{(augmented) Dyck pattern} is a collection $\bbD = (D_1,D_2,\ldots,D_r;\bbB)$ where
\begin{itemize}
    \item each $D_i$ is a Dyck path and $\bbB$ is a finite set of boxes,
    \item the sets $D_1,D_2,\ldots,D_r$ and $\bbB$ are pairwise disjoint,
    \item $\bbB$ can be expressed as a union
    \begin{align} \label{def:dyckpattern}
        \bbB = B_1 \cup B_2 \cup \ldots \cup B_r
    \end{align}
    in such a way that $(D_i,B_i)$ is an augmented Dyck path for every $i = 1,2,\ldots,r$.
\end{itemize}

Notice that the collection $(D_1,D_2,\ldots,D_r)$ is invariant under permuting the $D_i$. And we are not requiring the sets $B_i$ in ~\ref{def:dyckpattern} to be disjoint, and in particular we are not asking for the expression ~\ref{def:dyckpattern} to be unique. We also write $\bbD = (D_1,\ldots,D_r)$ when $\bbB = \emptyset$. We define the support of $\bbD$ by
\begin{equation} \label{def:suppD}
    \text{supp}(\bbD) = D_1 \cup D_2 \cup \ldots \cup D_r \cup \bbB.
\end{equation}

If $\lambda$ is a partition and $\bbD$ is a Dyck pattern with supp($\bbD$) disjoint from $\lambda$ (when we think of $\lambda$ as in \ref{def:partition}), then we define
\begin{equation} \label{def:lambdaD}
    \lambda(\bbD) = \lambda \cup \text{supp}(\bbD).
\end{equation}
We say that the Dyck pattern $\bbD$ is \emph{$\lambda$-admissible} if the following conditions are satisfied:
\begin{enumerate}
    \item $\lambda$ is disjoint from supp($\bbD$).
    \item $\lambda(\bbD)$ is (the set of boxes corresponding via ~\ref{def:partition} to) a partition.
    \item For every $i \neq j$, if there exists a box $(x',y') \in D_j$ which is located directly N, E, or NE from a box $(x,y) \in D_i$ (i.e. if $(x',y')$ is one of $(x,y+1),(x+1,y),(x+1,y+1)$), then every box located directly N, E, or NE from a box of $D_i$ must belong to $D_i$ or $D_j$.
    \item There is no bullet in $\bbB$ which is located directly N, E, or NE from a box in any $D_i$.
\end{enumerate}
Below are four examples of $\lambda$-admissible Dyck patterns for $\lambda = (4,3,1,1)$

\begin{minipage}{.24\textwidth}
\centering
\begin{tikzpicture}[x=\unitsize,y=\unitsize,baseline=0]
\tikzset{vertex/.style={}}%
\tikzset{edge/.style={very thick}}%
\draw[dotted] (0,0) -- (14,0);
\draw[dotted] (0,2) -- (14,2);
\draw[dotted] (0,4) -- (14,4);
\draw[dotted] (0,6) -- (14,6);
\draw[dotted] (0,8) -- (14,8);
\draw[dotted] (0,10) -- (14,10);
\draw[dotted] (2,-2) -- (2,12);
\draw[dotted] (4,-2) -- (4,12);
\draw[dotted] (6,-2) -- (6,12);
\draw[dotted] (8,-2) -- (8,12);
\draw[dotted] (10,-2) -- (10,12);
\draw[dotted] (12,-2) -- (12,12);
\draw[edge] (2,0) -- (10,0);
\draw[edge] (2,2) -- (10,2);
\draw[edge] (2,4) -- (8,4);
\draw[edge] (2,6) -- (4,6);
\draw[edge] (2,8) -- (4,8);
\draw[edge] (2,0) -- (2,8);
\draw[edge] (4,0) -- (4,8);
\draw[edge] (6,0) -- (6,4);
\draw[edge] (8,0) -- (8,4);
\draw[edge] (10,0) -- (10,2);
\draw[red, line width=5pt] (5,7) -- (7,7) -- (7,5) ;
\draw[red, line width=5pt] (4.6,5) -- (5.4,5) ;
\draw[red, line width=5pt] (5,9) -- (9,9) -- (9,5) ; 
\draw[fill=green] (9,3) circle [radius=0.3] ; 
\draw[fill=green] (3,9) circle [radius=0.3] ;
\end{tikzpicture}%
\end{minipage}
\begin{minipage}{.24\textwidth}
\centering
\begin{tikzpicture}[x=\unitsize,y=\unitsize,baseline=0]
\tikzset{vertex/.style={}}%
\tikzset{edge/.style={very thick}}%
\draw[dotted] (0,0) -- (14,0);
\draw[dotted] (0,2) -- (14,2);
\draw[dotted] (0,4) -- (14,4);
\draw[dotted] (0,6) -- (14,6);
\draw[dotted] (0,8) -- (14,8);
\draw[dotted] (0,10) -- (14,10);
\draw[dotted] (2,-2) -- (2,12);
\draw[dotted] (4,-2) -- (4,12);
\draw[dotted] (6,-2) -- (6,12);
\draw[dotted] (8,-2) -- (8,12);
\draw[dotted] (10,-2) -- (10,12);
\draw[dotted] (12,-2) -- (12,12);
\draw[edge] (2,0) -- (10,0);
\draw[edge] (2,2) -- (10,2);
\draw[edge] (2,4) -- (8,4);
\draw[edge] (2,6) -- (4,6);
\draw[edge] (2,8) -- (4,8);
\draw[edge] (2,0) -- (2,8);
\draw[edge] (4,0) -- (4,8);
\draw[edge] (6,0) -- (6,4);
\draw[edge] (8,0) -- (8,4);
\draw[edge] (10,0) -- (10,2);
\draw[red, line width=5pt] (5,7) -- (7,7) -- (7,5) ;
\draw[red, line width=5pt] (4.6,5) -- (5.4,5) ;
\draw[red, line width=5pt] (3,9) -- (9,9) -- (9,3) ;
\end{tikzpicture}%
\end{minipage}
\begin{minipage}{.24\textwidth}
\centering
\begin{tikzpicture}[x=\unitsize,y=\unitsize,baseline=0]
\tikzset{vertex/.style={}}%
\tikzset{edge/.style={very thick}}%
\draw[dotted] (0,0) -- (14,0);
\draw[dotted] (0,2) -- (14,2);
\draw[dotted] (0,4) -- (14,4);
\draw[dotted] (0,6) -- (14,6);
\draw[dotted] (0,8) -- (14,8);
\draw[dotted] (0,10) -- (14,10);
\draw[dotted] (2,-2) -- (2,12);
\draw[dotted] (4,-2) -- (4,12);
\draw[dotted] (6,-2) -- (6,12);
\draw[dotted] (8,-2) -- (8,12);
\draw[dotted] (10,-2) -- (10,12);
\draw[dotted] (12,-2) -- (12,12);
\draw[edge] (2,0) -- (10,0);
\draw[edge] (2,2) -- (10,2);
\draw[edge] (2,4) -- (8,4);
\draw[edge] (2,6) -- (4,6);
\draw[edge] (2,8) -- (4,8);
\draw[edge] (2,0) -- (2,8);
\draw[edge] (4,0) -- (4,8);
\draw[edge] (6,0) -- (6,4);
\draw[edge] (8,0) -- (8,4);
\draw[edge] (10,0) -- (10,2);
\draw[red, line width=5pt] (5,7) -- (7,7) -- (7,5) ;
\draw[red, line width=5pt] (4.6,5) -- (5.4,5) ;
\draw[red, line width=5pt] (3,9) -- (9,9) -- (9,3) -- (11,3) -- (11,1);
\end{tikzpicture}%
\end{minipage}
\begin{minipage}{.24\textwidth}
\centering
\begin{tikzpicture}[x=\unitsize,y=\unitsize,baseline=0]
\tikzset{vertex/.style={}}%
\tikzset{edge/.style={very thick}}%
\draw[dotted] (0,0) -- (14,0);
\draw[dotted] (0,2) -- (14,2);
\draw[dotted] (0,4) -- (14,4);
\draw[dotted] (0,6) -- (14,6);
\draw[dotted] (0,8) -- (14,8);
\draw[dotted] (0,10) -- (14,10);
\draw[dotted] (2,-2) -- (2,12);
\draw[dotted] (4,-2) -- (4,12);
\draw[dotted] (6,-2) -- (6,12);
\draw[dotted] (8,-2) -- (8,12);
\draw[dotted] (10,-2) -- (10,12);
\draw[dotted] (12,-2) -- (12,12);
\draw[edge] (2,0) -- (10,0);
\draw[edge] (2,2) -- (10,2);
\draw[edge] (2,4) -- (8,4);
\draw[edge] (2,6) -- (4,6);
\draw[edge] (2,8) -- (4,8);
\draw[edge] (2,0) -- (2,8);
\draw[edge] (4,0) -- (4,8);
\draw[edge] (6,0) -- (6,4);
\draw[edge] (8,0) -- (8,4);
\draw[edge] (10,0) -- (10,2);
\draw[red, line width=5pt] (7,5) -- (9,5) -- (9,3) ;
\draw[red, line width=5pt] (3,9) -- (5,9) -- (5,7) ;
\draw[fill=green] (5,5) circle [radius=0.3] ;
\draw[red, line width=5pt] (10.6,1) -- (11.4,1) ;
\end{tikzpicture}%
\end{minipage}
and three examples of Dyck patterns that are not $\lambda$-admissible

\begin{minipage}{.33\textwidth}
\centering
\begin{tikzpicture}[x=\unitsize,y=\unitsize,baseline=0]
\tikzset{vertex/.style={}}%
\tikzset{edge/.style={very thick}}%
\draw[dotted] (0,0) -- (14,0);
\draw[dotted] (0,2) -- (14,2);
\draw[dotted] (0,4) -- (14,4);
\draw[dotted] (0,6) -- (14,6);
\draw[dotted] (0,8) -- (14,8);
\draw[dotted] (0,10) -- (14,10);
\draw[dotted] (2,-2) -- (2,12);
\draw[dotted] (4,-2) -- (4,12);
\draw[dotted] (6,-2) -- (6,12);
\draw[dotted] (8,-2) -- (8,12);
\draw[dotted] (10,-2) -- (10,12);
\draw[dotted] (12,-2) -- (12,12);
\draw[edge] (2,0) -- (10,0);
\draw[edge] (2,2) -- (10,2);
\draw[edge] (2,4) -- (8,4);
\draw[edge] (2,6) -- (4,6);
\draw[edge] (2,8) -- (4,8);
\draw[edge] (2,0) -- (2,8);
\draw[edge] (4,0) -- (4,8);
\draw[edge] (6,0) -- (6,4);
\draw[edge] (8,0) -- (8,4);
\draw[edge] (10,0) -- (10,2);
\draw[red, line width=5pt] (5,7) -- (7,7) -- (7,5) ;
\draw[red, line width=5pt] (4.6,5) -- (5.4,5) ;
\draw[red, line width=5pt] (7,9) -- (9,9) -- (9,7) ; 
\draw[fill=green] (9,3) circle [radius=0.3] ; 
\draw[fill=green] (3,9) circle [radius=0.3] ;
\draw[fill=green] (9,5) circle [radius=0.3] ; 
\draw[fill=green] (5,9) circle [radius=0.3] ;
\end{tikzpicture}%
\end{minipage}
\begin{minipage}{.33\textwidth}
\centering
\begin{tikzpicture}[x=\unitsize,y=\unitsize,baseline=0]
\tikzset{vertex/.style={}}%
\tikzset{edge/.style={very thick}}%
\draw[dotted] (0,0) -- (14,0);
\draw[dotted] (0,2) -- (14,2);
\draw[dotted] (0,4) -- (14,4);
\draw[dotted] (0,6) -- (14,6);
\draw[dotted] (0,8) -- (14,8);
\draw[dotted] (0,10) -- (14,10);
\draw[dotted] (2,-2) -- (2,12);
\draw[dotted] (4,-2) -- (4,12);
\draw[dotted] (6,-2) -- (6,12);
\draw[dotted] (8,-2) -- (8,12);
\draw[dotted] (10,-2) -- (10,12);
\draw[dotted] (12,-2) -- (12,12);
\draw[edge] (2,0) -- (10,0);
\draw[edge] (2,2) -- (10,2);
\draw[edge] (2,4) -- (8,4);
\draw[edge] (2,6) -- (4,6);
\draw[edge] (2,8) -- (4,8);
\draw[edge] (2,0) -- (2,8);
\draw[edge] (4,0) -- (4,8);
\draw[edge] (6,0) -- (6,4);
\draw[edge] (8,0) -- (8,4);
\draw[edge] (10,0) -- (10,2);
\draw[red, line width=5pt] (5,7) -- (7,7) -- (7,5) ;
\draw[red, line width=5pt] (4.6,5) -- (5.4,5) ;
\draw[fill=green] (3,9) circle [radius=0.3] ;
\draw[red, line width=5pt] (5,9) -- (9,9) -- (9,5) ; 
\draw[red, line width=5pt] (9,3) -- (11,3) -- (11,1) ; 
\end{tikzpicture}%
\end{minipage}
\begin{minipage}{.33\textwidth}
\centering
\begin{tikzpicture}[x=\unitsize,y=\unitsize,baseline=0]
\tikzset{vertex/.style={}}%
\tikzset{edge/.style={very thick}}%
\draw[dotted] (0,0) -- (14,0);
\draw[dotted] (0,2) -- (14,2);
\draw[dotted] (0,4) -- (14,4);
\draw[dotted] (0,6) -- (14,6);
\draw[dotted] (0,8) -- (14,8);
\draw[dotted] (0,10) -- (14,10);
\draw[dotted] (2,-2) -- (2,12);
\draw[dotted] (4,-2) -- (4,12);
\draw[dotted] (6,-2) -- (6,12);
\draw[dotted] (8,-2) -- (8,12);
\draw[dotted] (10,-2) -- (10,12);
\draw[dotted] (12,-2) -- (12,12);
\draw[edge] (2,0) -- (10,0);
\draw[edge] (2,2) -- (10,2);
\draw[edge] (2,4) -- (8,4);
\draw[edge] (2,6) -- (4,6);
\draw[edge] (2,8) -- (4,8);
\draw[edge] (2,0) -- (2,8);
\draw[edge] (4,0) -- (4,8);
\draw[edge] (6,0) -- (6,4);
\draw[edge] (8,0) -- (8,4);
\draw[edge] (10,0) -- (10,2);
\draw[red, line width=5pt] (3,9) -- (5,9) -- (5,7) ;
\draw[red, line width=5pt] (7,7) -- (11,7) -- (11,3) ;
\draw[red, line width=5pt] (7,5) -- (9,5) -- (9,3) ;
\draw[fill=green] (5,5) circle [radius=0.3] ;
\draw[red, line width=5pt] (10.6,1) -- (11.4,1) ;
\end{tikzpicture}%
\end{minipage}

For a fixed $\lambda$, let $\bbD = (D_1,\ldots,D_r ; \bbB)$ be a $\lambda$-admissible Dyck pattern. Define
\begin{align} \label{def:lambdaB}
    \lambda(\bbB) = \lambda \sqcup \bbB.
\end{align}
We have the following two facts:
\begin{enumerate}
    \item The last condition in the definition of $\lambda$-admissible Dyck patterns is equivalent to the fact that $\lambda(\bbB)$ is (the set of boxes corresponding via ~\ref{def:partition} to) a partition.
    \item If $(D_1,D_2,\ldots,D_k)$ is a subcollection of $(D_1,D_2,\ldots,D_r)$ such that $|D_i| = 1$ for all $i = 1,2,\ldots,k$. And for any $i=1,2,\ldots,k$, the boxes located directed W and S of $D_i$ are not in $\bbB$. If. $D_i = \{(x,y)\}$, we require $(x-1,y),(x,y-1) \notin \bbB$. Then $\bbD'$ is a $\mu$-admissible Dyck pattern where $\bbD' = (D_{k+1},D_{k+2},\ldots,D_{r};\bbB)$ and $\mu = \lambda(\bbD_0)$ where $\bbD_0 = (D_1,D_2,\ldots,D_k)$.
\end{enumerate}

We also define the \emph{Dyck size} of $\bbD$ to be
\begin{align} \label{def:Dycksize}
    d(\bbD) = |D_1| + |D_2| + \ldots + |D_r|
\end{align}
and the \emph{bullet size} of $\bbD$ to be
\begin{align} \label{def:Bulletsize}
    b(\bbD) = |\bbB|.
\end{align}
The \emph{(total) size} of $\bbD$ is $|\bbD| = d(\bbD) + b(\bbD)$, so that $|\lambda(\bbD)| = |\lambda| + |\bbD|$ for every $\lambda$-admissible Dyck pattern $\bbD$.

\subsection{The general linear Lie algebra.} \label{subsection:LieAlgebra}
We use $U$ to denote a finite dimensional complex vector space with $\dim (U) = n$. Let $\gl(U)$ be the Lie algebra of endomorphisms of $U$, with the usual Lie bracket $[x,y] = xy-yx$. For any partition $\lambda = (\lambda_1,\lambda_2,\ldots,\lambda_n)$, we write $S_{\lambda}$ for the \emph{Schur functor} associated to $\lambda$. We also denote the dual vector space to be $U^{\vee} = \Hom_{\bbC}(U,\bbC)$. And $\lambda^{\vee} = (-\lambda_n,-\lambda_{n-1},\ldots,-\lambda_1)$. We have a natural isomorphism
\begin{equation} \label{SchurFunctorDualVectorSpace}
    S_{\lambda} U^{\vee} \simeq S_{\lambda^{\vee}} U.
\end{equation}
The convention for Schur functors we chose here is so that if $\lambda = (d)$ for $d \geq 0$, then $S_{\lambda} U = \Sym^d U$. And if $\lambda = (1^k)$, then $S_{\lambda} U = \bigwedge^k U$.

A choice of basis on $U$ determines a maximal torus $\ft$ of diagonal matrices inside $\gl(U)$, and a dual basis of $U^{\vee}$ with a corresponding maximal torus $\ft^{\vee}$ inside $\gl(U^{\vee})$. There is a natural identification $\gl(U) \simeq \gl(U^{\vee})$ that sends $\phi \mapsto - \phi^{\vee}$. Through this identification, positive weights with respect to $\ft$ will correspond to negative weights with respect to $\ft^{\vee}$ and vice-versa. Here we choose our conventions so that we only need to work with partitions $\lambda$ in the rest of the article.

\subsection{Representations of the general linear Lie superalgebra.} \label{subsection:LieSuperalgebra}
Throughout this article we let $V_0,V_1$ be complex vector spaces with $\dim(V_0) = m$ and $\dim(V_1) = n$, and assume that $m \geq n$. We write $W_i = V_i^{\vee}$ for their vector space duals. And let $V = V_0 \otimes V_1$ and $W = W_0 \otimes W_1 = V^{\vee}$. Consider the polynomial ring $S = \Sym(V)$ and the exterior algebra $E = \bigwedge W$. Choosing dual basis on the spaces $V_i$ and $W_i$, we can identify $S = \bbC[x_{i,j}]$ and $E = \bbC \langle e_{i,j} \rangle$ where $\langle , \rangle$ indicates that the multiplication in $E$ is skew-commutative.

We use $\fg = \gl(m|n)$ to denote the general linear Lie superalgebra of endomorphisms of the $\bbZ / 2 \bbZ$-graded vector space $W_0 \oplus V_1$ where $W_0 \simeq \bbC^m$ lies in degree $0$, and $V_1 \simeq \bbC^n$ lies in degree $1$. Consider the $\bbZ$-grading on $\fg$ given by
\begin{align*}
    & \fg_0 = \gl(V_0) \oplus \gl(W_1) \simeq \gl(W_0) \oplus \gl(W_1), \\
    & \fg_{-1} = \Hom_{\bbC}(V_0,W_1) \simeq W_0 \otimes W_1, \\
    & \fg_1 = \Hom_{\bbC}(W_1,V_0) \simeq V_0 \otimes V_1.
\end{align*}
The Lie superbracket is given by $[x,y] = xy - (-1)^{\deg(x)\deg(y)}yx$ for $x,y$ homogeneous elements of $\fg$. Note that the Lie superbracket restricts to the usual Lie bracket on $\fg_0$, which is a reductive Lie algebra. We define
\[
\fp = \fg_0 \oplus \fg_1
\]
which is a parabolic subalgebra of $\fg$. Every $\fg_0$ module $M$ can be thought of as a $\fp$-module by making $\fg_1$ to act on $M$ trivially. For a partition $\lambda = (\lambda_1,\lambda_2,\ldots,\lambda_n)$, consider the irreducible $\fg_0$-module $S_{\lambda} W_0 \otimes S_{\lambda} W_1$. We can think of it as a $\fp$-module, and define the induced representation to be
\begin{equation} \label{def:kac}
    K_{\lambda} = \Ind_{\fp}^{\fg}(S_{\lambda} W_0 \otimes S_{\lambda} W_1)
\end{equation}
This is called the \emph{Kac module of weight $\lambda$}. In general we consider a more general version of Kac modules by inducing from $S_{\lambda} W_0 \otimes S_{\mu} W_1 $ for an arbitrary pair of partitions $(\lambda,\mu)$. The special case of Kac modules that we consider here in \ref{def:kac} are the ones of \emph{maximal degree of atypicality}. They are all of degree of atypicality equals $\min (m,n)$. For a detail discussion about atypicality, see \cite{CW}[Section 2.2.6]. 

\begin{defn} \label{defn:Loewy}
A \emph{Loewy filtration} on a $\fg$-module $V$ is a filtration such that all consecutive quotients of it are semisimple $\fg$-modules.
\end{defn}
 In general this filtration might not be unique. A good feature about general Kac modules $K$ of $\gl(m|n)$ is that all of them have a unique Loewy filtration of length the degree of atypicality of $K$ \cite{SZ}[Theorem 3.2]. We will make use of this filtration in the main proof.

Note that $\fg_{-1} = W_0 \otimes W_1$ is an abelian Lie superalgebra concentrated in odd degree. So its universal enveloping algebra is $\cU(\fg_{-1}) = \bigwedge \fg_{-1} = \bigwedge W = E$. In fact, as a $E$-module, the Kac module of weight $\lambda$ is
\begin{equation}
    K_{\lambda} = E \otimes (S_{\lambda} W_0 \otimes S_{\lambda} W_1).
\end{equation}
The $\fg_0$-module structure of $K_{\lambda}$ can be obtained based on the Cauchy decomposition of exterior power of a tensor product, combined with the Littlewood-Richardson rule. As a $\fg$-module, $K_{\lambda}$ has a unique simple quotient, which we denote by $\bbL_{\lambda}$. Here $\bbL_{\lambda}$ is the simple $\fg$-module of highest weight $\lambda$. As a $\fg$-module, $K_{\lambda}$ is not semi-simple. But it has finite length with composition factors described as follows. Let
\begin{equation} \label{def:Klambdan}
    \cK(\lambda;n) = \{ (\bbD) = (D_1,\ldots,D_r) \mbox{ a $\lambda$-admissible Dyck pattern} \mid l(\lambda(\bbD)) \leq n \}.
\end{equation}
We stress the fact that the Dyck patterns in $\cK(\lambda;n)$ are not augmented, i.e., they contain no bullets. But they may contain Dyck paths of length one. The composition factors of the Kac modules are encoded by parabolic version of Kazhdan-Lusztig polynomials \cite{Bru03} \cite{Ser96}. Using the Dyck pattern interpretation of the parabolic Kaszhdan-Lusztig polynomials based on Rule II in \cite{SZJ12}[Section 3.1], we get the following theorem.

\begin{thm} \label{thm:CompositionSeriesKacModule}
If we let $[M]$ denote the class of a $\fg$-module $M$ in the Grothendieck group $K_0(\fg)$ of finite dimensional representations of $\fg$, then
\begin{equation}
\DS    [K_{\lambda}] = \sum_{\bbD \in \cK(\lambda;n)} [\bbL_{\lambda(\bbD)}]. 
\end{equation}
\end{thm}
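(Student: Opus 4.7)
The plan is to reduce the statement to the known expression of composition multiplicities of Kac modules in terms of (the evaluation at $1$ of) parabolic Kazhdan--Lusztig polynomials for $\gl(m|n)$, and then translate those polynomials into the Dyck pattern language via the combinatorial rule of Su--Zhang. Concretely, by the results of Brundan \cite{Bru03} and Serganova \cite{Ser96}, in the Grothendieck group $K_0(\fg)$ one has an expansion
\[
[K_{\lambda}] \;=\; \sum_{\mu} p_{\lambda,\mu}(1)\,[\bbL_{\mu}],
\]
where the sum runs over dominant integral weights $\mu$ (for our parabolic, partitions $\mu$ with $l(\mu)\le n$), and $p_{\lambda,\mu}(q)$ is the appropriate parabolic Kazhdan--Lusztig polynomial attached to the pair $(\lambda,\mu)$. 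So it suffices to show that $p_{\lambda,\mu}(1)$ equals the number of $\lambda$-admissible (non-augmented) Dyck patterns $\bbD \in \cK(\lambda;n)$ with $\lambda(\bbD)=\mu$, and in particular that this number is $0$ or $1$.

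First I would recall that the Kac modules $K_{\lambda}$ considered here are of maximal degree of atypicality $\min(m,n)$, so the relevant parabolic Kazhdan--Lusztig polynomials are governed by the combinatorics described in \cite{SZJ12}. Next I would identify the weight $\lambda$ with its $\infty$-symbol (equivalently, with the configuration of boxes in \ref{def:partition}), and identify $\mu$ with an admissible modification of $\lambda$ in the sense of the weight diagrams used there. Under this dictionary, Rule II of \cite[\S3.1]{SZJ12} describes $p_{\lambda,\mu}(q)$ as a sum over certain families of non-crossing arcs; each such family corresponds exactly to a collection of Dyck paths with endpoints on the antidiagonal separating the added boxes $\mu\setminus\lambda$ from $\lambda$. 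This is precisely the datum of a tuple $(D_1,\dots,D_r)$ from \ref{def:dyckpattern}, and the three geometric conditions in the definition of $\lambda$-admissibility translate, respectively, into the disjointness of the added boxes from $\lambda$, the requirement that $\mu=\lambda\cup\bigcup D_i$ again be a Young diagram, and the non-crossing/nesting condition on the arcs. The condition $l(\mu)\le n$ is the same as $l(\lambda(\bbD))\le n$, which is built into $\cK(\lambda;n)$. Finally, the restriction to non-augmented patterns reflects the fact that bullets $\bbB$ do not contribute to composition factors of Kac modules but only to the higher syzygies treated elsewhere in the paper.

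The main obstacle is the bookkeeping in this translation: one must match the arc-diagram conventions in \cite{SZJ12} (which are written for the parabolic of type $(m,n)$ in $\gl(m+n)$ and use weight labels $\vee/\wedge$) with the box-diagram conventions of \ref{def:partition}, checking that (a) Su--Zhang arcs of different lengths correspond to Dyck paths of the allowed odd lengths, (b) their non-crossing condition for distinct arcs is precisely the N/E/NE containment condition in (3) of the definition of $\lambda$-admissibility, and (c) the coefficient of $q^0$ in each arc contribution is $1$, so that $p_{\lambda,\mu}(1)$ counts admissible Dyck patterns without multiplicity. Once this dictionary is in place, the equality
\[
p_{\lambda,\mu}(1) \;=\; \#\{\bbD\in\cK(\lambda;n) : \lambda(\bbD)=\mu\} \in \{0,1\}
\]
is immediate, and summing over $\mu$ yields the desired formula
\[
[K_{\lambda}] \;=\; \sum_{\bbD\in\cK(\lambda;n)} [\bbL_{\lambda(\bbD)}].
\]
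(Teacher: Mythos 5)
Your proposal is essentially the same as the paper's: the paper also derives the theorem by combining the Brundan--Serganova expression of composition multiplicities of Kac modules via parabolic Kazhdan--Lusztig polynomials with the Dyck-pattern interpretation of those polynomials given by Rule II in \cite[\S3.1]{SZJ12}, and likewise does not carry out the full translation between arc diagrams and box diagrams. One small slip: the combinatorial Rule II you invoke is due to Shigechi and Zinn-Justin \cite{SZJ12}, not Su--Zhang (the Su--Zhang references \cite{SZ07,SZ} are used elsewhere in the paper, e.g.\ for the Loewy filtration and Hilbert series computations).
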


\begin{exmp}
Take $n=4$ and consider $\lambda = (4,3,1,1)$. Take the Kac module $K_{\lambda}$ has $19$ simple composition factors, classified by the Dyck patterns $\bbD$ pictured below (and labelled by $\lambda(\bbD)$)

\begin{minipage}{.18\textwidth}
\centering
\begin{tikzpicture}[x=\unitsize,y=\unitsize,baseline=0]
\tikzset{vertex/.style={}}%
\tikzset{edge/.style={very thick}}%
\draw[dotted] (0,0) -- (14,0);
\draw[dotted] (0,2) -- (14,2);
\draw[dotted] (0,4) -- (14,4);
\draw[dotted] (0,6) -- (14,6);
\draw[dotted] (0,8) -- (14,8);
\draw[dotted] (2,-2) -- (2,10);
\draw[dotted] (4,-2) -- (4,10);
\draw[dotted] (6,-2) -- (6,10);
\draw[dotted] (8,-2) -- (8,10);
\draw[dotted] (10,-2) -- (10,10);
\draw[dotted] (12,-2) -- (12,10);
\draw[edge] (2,0) -- (10,0);
\draw[edge] (2,2) -- (10,2);
\draw[edge] (2,4) -- (8,4);
\draw[edge] (2,6) -- (4,6);
\draw[edge] (2,8) -- (4,8);
\draw[edge] (2,0) -- (2,8);
\draw[edge] (4,0) -- (4,8);
\draw[edge] (6,0) -- (6,4);
\draw[edge] (8,0) -- (8,4);
\draw[edge] (10,0) -- (10,2);
\end{tikzpicture}%
\captionsetup{labelformat=empty}
\captionof{figure}{$(4,3,1,1)$}
\end{minipage}
\begin{minipage}{.18\textwidth}
\centering
\begin{tikzpicture}[x=\unitsize,y=\unitsize,baseline=0]
\tikzset{vertex/.style={}}%
\tikzset{edge/.style={very thick}}%
\draw[dotted] (0,0) -- (14,0);
\draw[dotted] (0,2) -- (14,2);
\draw[dotted] (0,4) -- (14,4);
\draw[dotted] (0,6) -- (14,6);
\draw[dotted] (0,8) -- (14,8);
\draw[dotted] (2,-2) -- (2,10);
\draw[dotted] (4,-2) -- (4,10);
\draw[dotted] (6,-2) -- (6,10);
\draw[dotted] (8,-2) -- (8,10);
\draw[dotted] (10,-2) -- (10,10);
\draw[dotted] (12,-2) -- (12,10);
\draw[edge] (2,0) -- (10,0);
\draw[edge] (2,2) -- (10,2);
\draw[edge] (2,4) -- (8,4);
\draw[edge] (2,6) -- (4,6);
\draw[edge] (2,8) -- (4,8);
\draw[edge] (2,0) -- (2,8);
\draw[edge] (4,0) -- (4,8);
\draw[edge] (6,0) -- (6,4);
\draw[edge] (8,0) -- (8,4);
\draw[edge] (10,0) -- (10,2);
\draw[red, line width=5pt] (9,3) -- (11,3) -- (11,1) ;
\end{tikzpicture}%
\captionsetup{labelformat=empty}
\captionof{figure}{$(5,5,1,1)$}
\end{minipage}
\begin{minipage}{.18\textwidth}
\centering
\begin{tikzpicture}[x=\unitsize,y=\unitsize,baseline=0]
\tikzset{vertex/.style={}}%
\tikzset{edge/.style={very thick}}%
\draw[dotted] (0,0) -- (14,0);
\draw[dotted] (0,2) -- (14,2);
\draw[dotted] (0,4) -- (14,4);
\draw[dotted] (0,6) -- (14,6);
\draw[dotted] (0,8) -- (14,8);
\draw[dotted] (2,-2) -- (2,10);
\draw[dotted] (4,-2) -- (4,10);
\draw[dotted] (6,-2) -- (6,10);
\draw[dotted] (8,-2) -- (8,10);
\draw[dotted] (10,-2) -- (10,10);
\draw[dotted] (12,-2) -- (12,10);
\draw[edge] (2,0) -- (10,0);
\draw[edge] (2,2) -- (10,2);
\draw[edge] (2,4) -- (8,4);
\draw[edge] (2,6) -- (4,6);
\draw[edge] (2,8) -- (4,8);
\draw[edge] (2,0) -- (2,8);
\draw[edge] (4,0) -- (4,8);
\draw[edge] (6,0) -- (6,4);
\draw[edge] (8,0) -- (8,4);
\draw[edge] (10,0) -- (10,2);
\draw[red, line width=5pt] (4.6,5) -- (5.4,5) ;
\end{tikzpicture}%
\captionsetup{labelformat=empty}
\captionof{figure}{$(4,3,2,1)$}
\end{minipage}
\begin{minipage}{.18\textwidth}
\centering
\begin{tikzpicture}[x=\unitsize,y=\unitsize,baseline=0]
\tikzset{vertex/.style={}}%
\tikzset{edge/.style={very thick}}%
\draw[dotted] (0,0) -- (14,0);
\draw[dotted] (0,2) -- (14,2);
\draw[dotted] (0,4) -- (14,4);
\draw[dotted] (0,6) -- (14,6);
\draw[dotted] (0,8) -- (14,8);
\draw[dotted] (2,-2) -- (2,10);
\draw[dotted] (4,-2) -- (4,10);
\draw[dotted] (6,-2) -- (6,10);
\draw[dotted] (8,-2) -- (8,10);
\draw[dotted] (10,-2) -- (10,10);
\draw[dotted] (12,-2) -- (12,10);
\draw[edge] (2,0) -- (10,0);
\draw[edge] (2,2) -- (10,2);
\draw[edge] (2,4) -- (8,4);
\draw[edge] (2,6) -- (4,6);
\draw[edge] (2,8) -- (4,8);
\draw[edge] (2,0) -- (2,8);
\draw[edge] (4,0) -- (4,8);
\draw[edge] (6,0) -- (6,4);
\draw[edge] (8,0) -- (8,4);
\draw[edge] (10,0) -- (10,2);
\draw[red, line width=5pt] (9,3) -- (11,3) -- (11,1) ;
\draw[red, line width=5pt] (4.6,5) -- (5.4,5) ;
\end{tikzpicture}%
\captionsetup{labelformat=empty}
\captionof{figure}{$(5,5,2,1)$}
\end{minipage}
\begin{minipage}{.18\textwidth}
\centering
\begin{tikzpicture}[x=\unitsize,y=\unitsize,baseline=0]
\tikzset{vertex/.style={}}%
\tikzset{edge/.style={very thick}}%
\draw[dotted] (0,0) -- (14,0);
\draw[dotted] (0,2) -- (14,2);
\draw[dotted] (0,4) -- (14,4);
\draw[dotted] (0,6) -- (14,6);
\draw[dotted] (0,8) -- (14,8);
\draw[dotted] (2,-2) -- (2,10);
\draw[dotted] (4,-2) -- (4,10);
\draw[dotted] (6,-2) -- (6,10);
\draw[dotted] (8,-2) -- (8,10);
\draw[dotted] (10,-2) -- (10,10);
\draw[dotted] (12,-2) -- (12,10);
\draw[edge] (2,0) -- (10,0);
\draw[edge] (2,2) -- (10,2);
\draw[edge] (2,4) -- (8,4);
\draw[edge] (2,6) -- (4,6);
\draw[edge] (2,8) -- (4,8);
\draw[edge] (2,0) -- (2,8);
\draw[edge] (4,0) -- (4,8);
\draw[edge] (6,0) -- (6,4);
\draw[edge] (8,0) -- (8,4);
\draw[edge] (10,0) -- (10,2);
\draw[red, line width=5pt] (5,7) -- (7,7) -- (7,5) ;
\draw[red, line width=5pt] (4.6,5) -- (5.4,5) ;
\end{tikzpicture}%
\captionsetup{labelformat=empty}
\captionof{figure}{$(4,3,3,3)$}
\end{minipage}

\begin{minipage}{.18\textwidth}
\centering
\begin{tikzpicture}[x=\unitsize,y=\unitsize,baseline=0]
\tikzset{vertex/.style={}}%
\tikzset{edge/.style={very thick}}%
\draw[dotted] (0,0) -- (14,0);
\draw[dotted] (0,2) -- (14,2);
\draw[dotted] (0,4) -- (14,4);
\draw[dotted] (0,6) -- (14,6);
\draw[dotted] (0,8) -- (14,8);
\draw[dotted] (2,-2) -- (2,10);
\draw[dotted] (4,-2) -- (4,10);
\draw[dotted] (6,-2) -- (6,10);
\draw[dotted] (8,-2) -- (8,10);
\draw[dotted] (10,-2) -- (10,10);
\draw[dotted] (12,-2) -- (12,10);
\draw[edge] (2,0) -- (10,0);
\draw[edge] (2,2) -- (10,2);
\draw[edge] (2,4) -- (8,4);
\draw[edge] (2,6) -- (4,6);
\draw[edge] (2,8) -- (4,8);
\draw[edge] (2,0) -- (2,8);
\draw[edge] (4,0) -- (4,8);
\draw[edge] (6,0) -- (6,4);
\draw[edge] (8,0) -- (8,4);
\draw[edge] (10,0) -- (10,2);
\draw[red, line width=5pt] (5,7) -- (7,7) -- (7,5) ;
\draw[red, line width=5pt] (4.6,5) -- (5.4,5) ;
\draw[red, line width=5pt] (9,3) -- (11,3) -- (11,1) ;
\end{tikzpicture}%
\captionsetup{labelformat=empty}
\captionof{figure}{$(5,5,3,3)$}
\end{minipage}
\begin{minipage}{.18\textwidth}
\centering
\begin{tikzpicture}[x=\unitsize,y=\unitsize,baseline=0]
\tikzset{vertex/.style={}}%
\tikzset{edge/.style={very thick}}%
\draw[dotted] (0,0) -- (14,0);
\draw[dotted] (0,2) -- (14,2);
\draw[dotted] (0,4) -- (14,4);
\draw[dotted] (0,6) -- (14,6);
\draw[dotted] (0,8) -- (14,8);
\draw[dotted] (2,-2) -- (2,10);
\draw[dotted] (4,-2) -- (4,10);
\draw[dotted] (6,-2) -- (6,10);
\draw[dotted] (8,-2) -- (8,10);
\draw[dotted] (10,-2) -- (10,10);
\draw[dotted] (12,-2) -- (12,10);
\draw[edge] (2,0) -- (10,0);
\draw[edge] (2,2) -- (10,2);
\draw[edge] (2,4) -- (8,4);
\draw[edge] (2,6) -- (4,6);
\draw[edge] (2,8) -- (4,8);
\draw[edge] (2,0) -- (2,8);
\draw[edge] (4,0) -- (4,8);
\draw[edge] (6,0) -- (6,4);
\draw[edge] (8,0) -- (8,4);
\draw[edge] (10,0) -- (10,2);
\draw[red, line width=5pt] (5,7) -- (7,7) -- (7,5) -- (9,5) -- (9,3);
\draw[red, line width=5pt] (4.6,5) -- (5.4,5) ;
\end{tikzpicture}%
\captionsetup{labelformat=empty}
\captionof{figure}{$(4,4,4,3)$}
\end{minipage}
\begin{minipage}{.18\textwidth}
\centering
\begin{tikzpicture}[x=\unitsize,y=\unitsize,baseline=0]
\tikzset{vertex/.style={}}%
\tikzset{edge/.style={very thick}}%
\draw[dotted] (0,0) -- (14,0);
\draw[dotted] (0,2) -- (14,2);
\draw[dotted] (0,4) -- (14,4);
\draw[dotted] (0,6) -- (14,6);
\draw[dotted] (0,8) -- (14,8);
\draw[dotted] (2,-2) -- (2,10);
\draw[dotted] (4,-2) -- (4,10);
\draw[dotted] (6,-2) -- (6,10);
\draw[dotted] (8,-2) -- (8,10);
\draw[dotted] (10,-2) -- (10,10);
\draw[dotted] (12,-2) -- (12,10);
\draw[edge] (2,0) -- (10,0);
\draw[edge] (2,2) -- (10,2);
\draw[edge] (2,4) -- (8,4);
\draw[edge] (2,6) -- (4,6);
\draw[edge] (2,8) -- (4,8);
\draw[edge] (2,0) -- (2,8);
\draw[edge] (4,0) -- (4,8);
\draw[edge] (6,0) -- (6,4);
\draw[edge] (8,0) -- (8,4);
\draw[edge] (10,0) -- (10,2);
\draw[red, line width=5pt] (5,7) -- (7,7) -- (7,5) -- (9,5) -- (9,3) -- (11,3) -- (11,1);
\draw[red, line width=5pt] (4.6,5) -- (5.4,5) ;
\end{tikzpicture}%
\captionsetup{labelformat=empty}
\captionof{figure}{$(5,5,4,3)$}
\end{minipage}
\begin{minipage}{.18\textwidth}
\centering
\begin{tikzpicture}[x=\unitsize,y=\unitsize,baseline=0]
\tikzset{vertex/.style={}}%
\tikzset{edge/.style={very thick}}%
\draw[dotted] (0,0) -- (14,0);
\draw[dotted] (0,2) -- (14,2);
\draw[dotted] (0,4) -- (14,4);
\draw[dotted] (0,6) -- (14,6);
\draw[dotted] (0,8) -- (14,8);
\draw[dotted] (2,-2) -- (2,10);
\draw[dotted] (4,-2) -- (4,10);
\draw[dotted] (6,-2) -- (6,10);
\draw[dotted] (8,-2) -- (8,10);
\draw[dotted] (10,-2) -- (10,10);
\draw[dotted] (12,-2) -- (12,10);
\draw[edge] (2,0) -- (10,0);
\draw[edge] (2,2) -- (10,2);
\draw[edge] (2,4) -- (8,4);
\draw[edge] (2,6) -- (4,6);
\draw[edge] (2,8) -- (4,8);
\draw[edge] (2,0) -- (2,8);
\draw[edge] (4,0) -- (4,8);
\draw[edge] (6,0) -- (6,4);
\draw[edge] (8,0) -- (8,4);
\draw[edge] (10,0) -- (10,2);
\draw[red, line width=5pt] (8.6,3) -- (9.4,3) ;
\end{tikzpicture}%
\captionsetup{labelformat=empty}
\captionof{figure}{$(4,4,1,1)$}
\end{minipage}
\begin{minipage}{.18\textwidth}
\centering
\begin{tikzpicture}[x=\unitsize,y=\unitsize,baseline=0]
\tikzset{vertex/.style={}}%
\tikzset{edge/.style={very thick}}%
\draw[dotted] (0,0) -- (14,0);
\draw[dotted] (0,2) -- (14,2);
\draw[dotted] (0,4) -- (14,4);
\draw[dotted] (0,6) -- (14,6);
\draw[dotted] (0,8) -- (14,8);
\draw[dotted] (2,-2) -- (2,10);
\draw[dotted] (4,-2) -- (4,10);
\draw[dotted] (6,-2) -- (6,10);
\draw[dotted] (8,-2) -- (8,10);
\draw[dotted] (10,-2) -- (10,10);
\draw[dotted] (12,-2) -- (12,10);
\draw[edge] (2,0) -- (10,0);
\draw[edge] (2,2) -- (10,2);
\draw[edge] (2,4) -- (8,4);
\draw[edge] (2,6) -- (4,6);
\draw[edge] (2,8) -- (4,8);
\draw[edge] (2,0) -- (2,8);
\draw[edge] (4,0) -- (4,8);
\draw[edge] (6,0) -- (6,4);
\draw[edge] (8,0) -- (8,4);
\draw[edge] (10,0) -- (10,2);
\draw[red, line width=5pt] (10.6,1) -- (11.4,1) ;
\end{tikzpicture}%
\captionsetup{labelformat=empty}
\captionof{figure}{$(5,3,1,1)$}
\end{minipage}

\begin{minipage}{.18\textwidth}
\centering
\begin{tikzpicture}[x=\unitsize,y=\unitsize,baseline=0]
\tikzset{vertex/.style={}}%
\tikzset{edge/.style={very thick}}%
\draw[dotted] (0,0) -- (14,0);
\draw[dotted] (0,2) -- (14,2);
\draw[dotted] (0,4) -- (14,4);
\draw[dotted] (0,6) -- (14,6);
\draw[dotted] (0,8) -- (14,8);
\draw[dotted] (2,-2) -- (2,10);
\draw[dotted] (4,-2) -- (4,10);
\draw[dotted] (6,-2) -- (6,10);
\draw[dotted] (8,-2) -- (8,10);
\draw[dotted] (10,-2) -- (10,10);
\draw[dotted] (12,-2) -- (12,10);
\draw[edge] (2,0) -- (10,0);
\draw[edge] (2,2) -- (10,2);
\draw[edge] (2,4) -- (8,4);
\draw[edge] (2,6) -- (4,6);
\draw[edge] (2,8) -- (4,8);
\draw[edge] (2,0) -- (2,8);
\draw[edge] (4,0) -- (4,8);
\draw[edge] (6,0) -- (6,4);
\draw[edge] (8,0) -- (8,4);
\draw[edge] (10,0) -- (10,2);
\draw[red, line width=5pt] (8.6,3) -- (9.4,3) ;
\draw[red, line width=5pt] (4.6,5) -- (5.4,5) ;
\end{tikzpicture}%
\captionsetup{labelformat=empty}
\captionof{figure}{$(4,4,2,1)$}
\end{minipage}
\begin{minipage}{.18\textwidth}
\centering
\begin{tikzpicture}[x=\unitsize,y=\unitsize,baseline=0]
\tikzset{vertex/.style={}}%
\tikzset{edge/.style={very thick}}%
\draw[dotted] (0,0) -- (14,0);
\draw[dotted] (0,2) -- (14,2);
\draw[dotted] (0,4) -- (14,4);
\draw[dotted] (0,6) -- (14,6);
\draw[dotted] (0,8) -- (14,8);
\draw[dotted] (2,-2) -- (2,10);
\draw[dotted] (4,-2) -- (4,10);
\draw[dotted] (6,-2) -- (6,10);
\draw[dotted] (8,-2) -- (8,10);
\draw[dotted] (10,-2) -- (10,10);
\draw[dotted] (12,-2) -- (12,10);
\draw[edge] (2,0) -- (10,0);
\draw[edge] (2,2) -- (10,2);
\draw[edge] (2,4) -- (8,4);
\draw[edge] (2,6) -- (4,6);
\draw[edge] (2,8) -- (4,8);
\draw[edge] (2,0) -- (2,8);
\draw[edge] (4,0) -- (4,8);
\draw[edge] (6,0) -- (6,4);
\draw[edge] (8,0) -- (8,4);
\draw[edge] (10,0) -- (10,2);
\draw[red, line width=5pt] (8.6,3) -- (9.4,3) ;
\draw[red, line width=5pt] (4.6,5) -- (5.4,5) ;
\draw[red, line width=5pt] (5,7) -- (7,7) -- (7,5) ;
\end{tikzpicture}%
\captionsetup{labelformat=empty}
\captionof{figure}{$(4,4,3,3)$}
\end{minipage}
\begin{minipage}{.18\textwidth}
\centering
\begin{tikzpicture}[x=\unitsize,y=\unitsize,baseline=0]
\tikzset{vertex/.style={}}%
\tikzset{edge/.style={very thick}}%
\draw[dotted] (0,0) -- (14,0);
\draw[dotted] (0,2) -- (14,2);
\draw[dotted] (0,4) -- (14,4);
\draw[dotted] (0,6) -- (14,6);
\draw[dotted] (0,8) -- (14,8);
\draw[dotted] (2,-2) -- (2,10);
\draw[dotted] (4,-2) -- (4,10);
\draw[dotted] (6,-2) -- (6,10);
\draw[dotted] (8,-2) -- (8,10);
\draw[dotted] (10,-2) -- (10,10);
\draw[dotted] (12,-2) -- (12,10);
\draw[edge] (2,0) -- (10,0);
\draw[edge] (2,2) -- (10,2);
\draw[edge] (2,4) -- (8,4);
\draw[edge] (2,6) -- (4,6);
\draw[edge] (2,8) -- (4,8);
\draw[edge] (2,0) -- (2,8);
\draw[edge] (4,0) -- (4,8);
\draw[edge] (6,0) -- (6,4);
\draw[edge] (8,0) -- (8,4);
\draw[edge] (10,0) -- (10,2);
\draw[red, line width=5pt] (8.6,3) -- (9.4,3) ;
\draw[red, line width=5pt] (4.6,5) -- (5.4,5) ;
\draw[red, line width=5pt] (5,7) -- (7,7) -- (7,5) -- (11,5) -- (11,1);
\end{tikzpicture}%
\captionsetup{labelformat=empty}
\captionof{figure}{$(5,5,5,3)$}
\end{minipage}
\begin{minipage}{.18\textwidth}
\centering
\begin{tikzpicture}[x=\unitsize,y=\unitsize,baseline=0]
\tikzset{vertex/.style={}}%
\tikzset{edge/.style={very thick}}%
\draw[dotted] (0,0) -- (14,0);
\draw[dotted] (0,2) -- (14,2);
\draw[dotted] (0,4) -- (14,4);
\draw[dotted] (0,6) -- (14,6);
\draw[dotted] (0,8) -- (14,8);
\draw[dotted] (2,-2) -- (2,10);
\draw[dotted] (4,-2) -- (4,10);
\draw[dotted] (6,-2) -- (6,10);
\draw[dotted] (8,-2) -- (8,10);
\draw[dotted] (10,-2) -- (10,10);
\draw[dotted] (12,-2) -- (12,10);
\draw[edge] (2,0) -- (10,0);
\draw[edge] (2,2) -- (10,2);
\draw[edge] (2,4) -- (8,4);
\draw[edge] (2,6) -- (4,6);
\draw[edge] (2,8) -- (4,8);
\draw[edge] (2,0) -- (2,8);
\draw[edge] (4,0) -- (4,8);
\draw[edge] (6,0) -- (6,4);
\draw[edge] (8,0) -- (8,4);
\draw[edge] (10,0) -- (10,2);
\draw[red, line width=5pt] (10.6,1) -- (11.4,1) ;
\draw[red, line width=5pt] (4.6,5) -- (5.4,5) ;
\end{tikzpicture}%
\captionsetup{labelformat=empty}
\captionof{figure}{$(5,3,2,1)$}
\end{minipage}
\begin{minipage}{.18\textwidth}
\centering
\begin{tikzpicture}[x=\unitsize,y=\unitsize,baseline=0]
\tikzset{vertex/.style={}}%
\tikzset{edge/.style={very thick}}%
\draw[dotted] (0,0) -- (14,0);
\draw[dotted] (0,2) -- (14,2);
\draw[dotted] (0,4) -- (14,4);
\draw[dotted] (0,6) -- (14,6);
\draw[dotted] (0,8) -- (14,8);
\draw[dotted] (2,-2) -- (2,10);
\draw[dotted] (4,-2) -- (4,10);
\draw[dotted] (6,-2) -- (6,10);
\draw[dotted] (8,-2) -- (8,10);
\draw[dotted] (10,-2) -- (10,10);
\draw[dotted] (12,-2) -- (12,10);
\draw[edge] (2,0) -- (10,0);
\draw[edge] (2,2) -- (10,2);
\draw[edge] (2,4) -- (8,4);
\draw[edge] (2,6) -- (4,6);
\draw[edge] (2,8) -- (4,8);
\draw[edge] (2,0) -- (2,8);
\draw[edge] (4,0) -- (4,8);
\draw[edge] (6,0) -- (6,4);
\draw[edge] (8,0) -- (8,4);
\draw[edge] (10,0) -- (10,2);
\draw[red, line width=5pt] (10.6,1) -- (11.4,1) ;
\draw[red, line width=5pt] (4.6,5) -- (5.4,5) ;
\draw[red, line width=5pt] (5,7) -- (7,7) -- (7,5) ;
\end{tikzpicture}%
\captionsetup{labelformat=empty}
\captionof{figure}{$(5,3,3,3)$}
\end{minipage}

\begin{minipage}{.18\textwidth}
\centering
\begin{tikzpicture}[x=\unitsize,y=\unitsize,baseline=0]
\tikzset{vertex/.style={}}%
\tikzset{edge/.style={very thick}}%
\draw[dotted] (0,0) -- (14,0);
\draw[dotted] (0,2) -- (14,2);
\draw[dotted] (0,4) -- (14,4);
\draw[dotted] (0,6) -- (14,6);
\draw[dotted] (0,8) -- (14,8);
\draw[dotted] (2,-2) -- (2,10);
\draw[dotted] (4,-2) -- (4,10);
\draw[dotted] (6,-2) -- (6,10);
\draw[dotted] (8,-2) -- (8,10);
\draw[dotted] (10,-2) -- (10,10);
\draw[dotted] (12,-2) -- (12,10);
\draw[edge] (2,0) -- (10,0);
\draw[edge] (2,2) -- (10,2);
\draw[edge] (2,4) -- (8,4);
\draw[edge] (2,6) -- (4,6);
\draw[edge] (2,8) -- (4,8);
\draw[edge] (2,0) -- (2,8);
\draw[edge] (4,0) -- (4,8);
\draw[edge] (6,0) -- (6,4);
\draw[edge] (8,0) -- (8,4);
\draw[edge] (10,0) -- (10,2);
\draw[red, line width=5pt] (10.6,1) -- (11.4,1) ;
\draw[red, line width=5pt] (4.6,5) -- (5.4,5) ;
\draw[red, line width=5pt] (5,7) -- (7,7) -- (7,5) -- (9,5) -- (9,3) ;
\end{tikzpicture}%
\captionsetup{labelformat=empty}
\captionof{figure}{$(5,4,4,3)$}
\end{minipage}
\begin{minipage}{.18\textwidth}
\centering
\begin{tikzpicture}[x=\unitsize,y=\unitsize,baseline=0]
\tikzset{vertex/.style={}}%
\tikzset{edge/.style={very thick}}%
\draw[dotted] (0,0) -- (14,0);
\draw[dotted] (0,2) -- (14,2);
\draw[dotted] (0,4) -- (14,4);
\draw[dotted] (0,6) -- (14,6);
\draw[dotted] (0,8) -- (14,8);
\draw[dotted] (2,-2) -- (2,10);
\draw[dotted] (4,-2) -- (4,10);
\draw[dotted] (6,-2) -- (6,10);
\draw[dotted] (8,-2) -- (8,10);
\draw[dotted] (10,-2) -- (10,10);
\draw[dotted] (12,-2) -- (12,10);
\draw[edge] (2,0) -- (10,0);
\draw[edge] (2,2) -- (10,2);
\draw[edge] (2,4) -- (8,4);
\draw[edge] (2,6) -- (4,6);
\draw[edge] (2,8) -- (4,8);
\draw[edge] (2,0) -- (2,8);
\draw[edge] (4,0) -- (4,8);
\draw[edge] (6,0) -- (6,4);
\draw[edge] (8,0) -- (8,4);
\draw[edge] (10,0) -- (10,2);
\draw[red, line width=5pt] (10.6,1) -- (11.4,1) ;
\draw[red, line width=5pt] (8.6,3) -- (9.4,3) ;
\end{tikzpicture}%
\captionsetup{labelformat=empty}
\captionof{figure}{$(5,4,1,1)$}
\end{minipage}
\begin{minipage}{.18\textwidth}
\centering
\begin{tikzpicture}[x=\unitsize,y=\unitsize,baseline=0]
\tikzset{vertex/.style={}}%
\tikzset{edge/.style={very thick}}%
\draw[dotted] (0,0) -- (14,0);
\draw[dotted] (0,2) -- (14,2);
\draw[dotted] (0,4) -- (14,4);
\draw[dotted] (0,6) -- (14,6);
\draw[dotted] (0,8) -- (14,8);
\draw[dotted] (2,-2) -- (2,10);
\draw[dotted] (4,-2) -- (4,10);
\draw[dotted] (6,-2) -- (6,10);
\draw[dotted] (8,-2) -- (8,10);
\draw[dotted] (10,-2) -- (10,10);
\draw[dotted] (12,-2) -- (12,10);
\draw[edge] (2,0) -- (10,0);
\draw[edge] (2,2) -- (10,2);
\draw[edge] (2,4) -- (8,4);
\draw[edge] (2,6) -- (4,6);
\draw[edge] (2,8) -- (4,8);
\draw[edge] (2,0) -- (2,8);
\draw[edge] (4,0) -- (4,8);
\draw[edge] (6,0) -- (6,4);
\draw[edge] (8,0) -- (8,4);
\draw[edge] (10,0) -- (10,2);
\draw[red, line width=5pt] (10.6,1) -- (11.4,1) ;
\draw[red, line width=5pt] (8.6,3) -- (9.4,3) ;
\draw[red, line width=5pt] (4.6,5) -- (5.4,5) ;
\end{tikzpicture}%
\captionsetup{labelformat=empty}
\captionof{figure}{$(5,4,2,1)$}
\end{minipage}
\begin{minipage}{.18\textwidth}
\centering
\begin{tikzpicture}[x=\unitsize,y=\unitsize,baseline=0]
\tikzset{vertex/.style={}}%
\tikzset{edge/.style={very thick}}%
\draw[dotted] (0,0) -- (14,0);
\draw[dotted] (0,2) -- (14,2);
\draw[dotted] (0,4) -- (14,4);
\draw[dotted] (0,6) -- (14,6);
\draw[dotted] (0,8) -- (14,8);
\draw[dotted] (2,-2) -- (2,10);
\draw[dotted] (4,-2) -- (4,10);
\draw[dotted] (6,-2) -- (6,10);
\draw[dotted] (8,-2) -- (8,10);
\draw[dotted] (10,-2) -- (10,10);
\draw[dotted] (12,-2) -- (12,10);
\draw[edge] (2,0) -- (10,0);
\draw[edge] (2,2) -- (10,2);
\draw[edge] (2,4) -- (8,4);
\draw[edge] (2,6) -- (4,6);
\draw[edge] (2,8) -- (4,8);
\draw[edge] (2,0) -- (2,8);
\draw[edge] (4,0) -- (4,8);
\draw[edge] (6,0) -- (6,4);
\draw[edge] (8,0) -- (8,4);
\draw[edge] (10,0) -- (10,2);
\draw[red, line width=5pt] (10.6,1) -- (11.4,1) ;
\draw[red, line width=5pt] (4.6,5) -- (5.4,5) ;
\draw[red, line width=5pt] (5,7) -- (7,7) -- (7,5) ;
\draw[red, line width=5pt] (8.6,3) -- (9.4,3) ;
\end{tikzpicture}%
\captionsetup{labelformat=empty}
\captionof{figure}{$(5,4,3,3)$}
\end{minipage}
\end{exmp}

\subsection{The BGG correspondence.} \label{subsec:BGG}
Recall from Section~\ref{subsection:LieSuperalgebra} that $S = \Sym(V) = \bbC[x_{i,j}]$ and $E = \bigwedge W = \bbC \langle e_{i,j} \rangle$. If $M = \dsum_{t \in \bbZ} M_t$ is a finitely generated graded $S$-module, let $M^{\vee}$ denote its \emph{graded dual},
\[
\DS M^{\vee} = \dsum_{t \in \bbZ} \Hom_{\bbC} (M_t, \bbC) = \dsum_{t \in \bbZ} M_t^{\vee},
\]
where the action of $S$ is given by $(s \smcdot \phi)(m) = \phi(s \smcdot m)$ for $s \in S$, $\phi \in M^{\vee}$ and $m \in M$ homogeneous elements. We can associate to $M$ a complex $\tilde{\bR}(M)$ of free $E$-modules (which is a modification of the complex $\tilde{\bR}(M)$ in \cite{Eis05}[Section 7E]):
\[
\tilde{\bR}(M) \colon \quad \ldots \to E \otimes M_t^{\vee} \to E \otimes M_{t-1}^{\vee} \to \ldots
\]
\[
\DS e \otimes \phi \mapsto \sum_{i,j} e \cdot e_{i,j} \otimes x_{i,j} \cdot \phi.
\]
The convention we use here is that $\deg(E_s) = \deg(\bigwedge^s W) = s$. So we grade $E$ positively such that all the $W$-variables $e_{i,j}$ get degree $1$. This is different from \cite{Eis05}[Section 7B] where the $W$-variables are given degree $-1$. With this convention we give $M_t^{\vee}$ degree $t$ and an analogue of \cite{Eis05}[Proposition 7.21] gives us
\[
H_t(\tilde{\bR}(M))_{s+t} \simeq \Tor_s(\bbC,M)_{s+t}^{\vee}
\]
The $E$-module $H_t(\tilde{\bR}(M))$ is finitely generated and it encodes (up to taking vector space duals) the $t$-th linear strand of the minimal free resolution of $M$.

For simplicity of notations, we assume $M = I_{\lambda} \subseteq S$ is the principal $\GL$-equivariant ideal generated by $S_{\lambda} V_0 \otimes S_{\lambda} V_1$. Then $M$ is a $\gl(V_0) \times \gl(V_1)$-equivariant $S$-module with $M_t = \dsum_{\substack{|\mu| = t \\ \lambda \leq \mu}} S_{\mu} V_0 \otimes S_{\mu} V_1$. And
\[
\DS \tilde{\bR}(I_{\lambda})_t = E \otimes M_t^{\vee} = \dsum_{\substack{|\mu| = t \\ \lambda \leq \mu}} E \otimes (S_{\mu} W_0 \otimes S_{\mu} W_1) = \dsum_{\substack{|\mu| = t \\ \lambda \leq \mu}} K_{\mu}.
\]
It is proven that in \cite{RW}[Theorem 3.1] that this makes $\tilde{\bR}(I_{\lambda})$ a complex of $\fg$-modules. Moreover, the map $K_{\mu} \to K_{\nu}$ in $\tilde{\bR}(I_{\lambda})_t \to \tilde{\bR}(I_{\lambda})_{t-1}$ is nonzero if and only if $\nu \leq \mu$ and $|\mu| = |\nu| + 1$ (Pictorially, this means $\mu$ and $\nu$ are only differed by one box). And we will use this in Section ~\ref{section:MainProof} to analyze $H_t(\tilde{\bR}(I_{\lambda}))$ as a $\fg$-module.

\section{Main Proof} \label{section:MainProof}
\begin{defn}
Let $V$ be a finite dimensional $\fg$-module. We say $v_{\lambda}$ is a \emph{primitive vector} of weight $\lambda$ in $V$ if there exists submodule $W \subseteq V$ such that $\fg_{+1} v \in W$ but $v \notin W$. In this case, we also say that $\lambda$ is a \emph{primitive weight} of $V$. Two primitive vectors which generate the same indecomposable submodule are considered the same. 
\end{defn}

\begin{notn}
The multiplicity $m_{\lambda}$ of a primitive weight $\lambda$ is equal to the dimension of the subspace generated by all primitive vectors in $V$ of weight $\lambda$. Also $v_{\lambda}^{\mu}$ denotes a primitive vector of weight $\lambda$ in the Kac module $K_{\mu}$.
\end{notn}

Via BGG correspondence, we can concentrate on the map $\psi \colon K_{\mu} \to K_{\lambda}$ where $|\mu| = |\lambda| + 1$ and $\lambda \leq \mu$. By Lemma 3.2 \cite{RW}, we have this map is nonzero and unique up to scaling. Note that there is a unique primitive vector $v_{\mu}^{\lambda}$ in $K_{\lambda}$. We have that $\psi(K_{\mu}) \subseteq K_{\lambda}$ is on the one hand, the submodule generated by the primitive vector $v_{\mu}^{\lambda} \in K_{\lambda}$. On the other hand, it is isomorphic to a quotient of $K_{\mu}$. We also have the fact that the multiplicity $m_{\lambda}$ of any primitive weight $\lambda$ in any Kac module is $1$. Hence a primitive weight uniquely determines its corresponding primitive vector. Also the primitive vectors generated by $v_{\mu}^{\lambda}$ in $K_{\lambda}$ is a subset of the primitive vectors in $K_{\mu}$. In other words, if $v$ is a primitive vector in $K_{\mu}$ and $\psi(v) \neq 0$, then $\psi(v)$ is a primitive vector of the same weight in $\psi(K_{\mu}) \subseteq K_{\lambda}$. And therefore a primitive vector in $K_{\lambda}$. 

\begin{lem} \label{lemma:image1}
Let $\alpha$ be a primitive weight whose corresponding primitive vector is generated by $v_{\mu}^{\lambda}$. And let $v_{\alpha}^{\mu}$ be the primitive vector in $K_{\mu}$ of weight $\alpha$. Then $\psi(v_{\alpha}^{\mu}) = c v_{\alpha}^{\lambda}$ for some non-zero scalar $c \in \bbC$. In particular, $\psi(v_{\alpha}^{\mu}) \neq 0$.
\end{lem}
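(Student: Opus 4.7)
The plan is to reduce the problem to a nonvanishing statement and then produce an explicit primitive preimage of $v_\alpha^\lambda$ in $K_\mu$. Once nonvanishing is established, the rest is automatic: the paragraph preceding the lemma says that a nonzero image under $\psi$ of a primitive vector of weight $\alpha$ is again primitive of weight $\alpha$ in $K_\lambda$, and the multiplicity-one property of primitive weights in Kac modules forces such a vector to be a scalar multiple of $v_\alpha^\lambda$. So the entire content of the lemma is the assertion $\psi(v_\alpha^\mu) \neq 0$.

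To prove nonvanishing, first I would unpack the hypothesis. Since $\psi(K_\mu)$ is the submodule of $K_\lambda$ generated by the primitive vector $v_\mu^\lambda$, and since by assumption $v_\alpha^\lambda$ lies in the submodule generated by $v_\mu^\lambda$, we have $v_\alpha^\lambda \in \psi(K_\mu)$. Pick any preimage $w \in K_\mu$ with $\psi(w) = v_\alpha^\lambda$; projecting onto the weight-$\alpha$ isotypic component (which is preserved by the $\fg$-equivariant map $\psi$) I may assume $w$ has weight $\alpha$.

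Next I would show that $w$ is primitive in $K_\mu$. Because $v_\alpha^\lambda$ is primitive in $K_\lambda$, there is a submodule $W \subseteq K_\lambda$ with $\fg_{+1}\cdot v_\alpha^\lambda \subseteq W$ and $v_\alpha^\lambda \notin W$. Pulling back, set $W' = \psi^{-1}(W) \subseteq K_\mu$, a submodule since $\psi$ is $\fg$-equivariant. Then $\psi(\fg_{+1}\cdot w) = \fg_{+1}\cdot v_\alpha^\lambda \subseteq W$ gives $\fg_{+1}\cdot w \subseteq W'$, while $\psi(w) = v_\alpha^\lambda \notin W$ gives $w \notin W'$. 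Hence $w$ is primitive of weight $\alpha$ in $K_\mu$. Invoking the multiplicity-one property recalled in the excerpt, $w = c'\, v_\alpha^\mu$ for some $c' \in \bbC^\times$, and applying $\psi$ yields $\psi(v_\alpha^\mu) = (c')^{-1} v_\alpha^\lambda \neq 0$, giving the desired $c = (c')^{-1}$.

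The main conceptual point — and the only nontrivial step — is the pullback argument that $w$ is primitive in $K_\mu$: one must recognize that the witness submodule $W$ downstairs pulls back to a witness submodule $W'$ upstairs. Once that is in hand, the rest of the argument is a direct application of the multiplicity-one property and the observation already recorded in the excerpt about how primitive vectors interact with nonzero $\fg$-module maps.
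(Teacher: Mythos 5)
Your proof is correct, and it takes a genuinely different route from the paper's. The paper argues by contradiction in the Grothendieck group: if $\psi(v_\alpha^\mu) = 0$, then the submodule generated by $v_\alpha^\mu$ lies in $\ker\psi$, so $[\bbL_\alpha]$ has multiplicity at least $1$ in $[\ker\psi]$; since $[\bbL_\alpha]$ has multiplicity exactly $1$ in $[K_\mu]$, it has multiplicity $0$ in $[\psi(K_\mu)]$, contradicting the hypothesis that $\psi(K_\mu)$ — the submodule generated by $v_\mu^\lambda$ — contains the primitive vector $v_\alpha^\lambda$ and hence has $\bbL_\alpha$ as a composition factor. Your argument instead constructs an explicit primitive preimage: pick a weight-$\alpha$ preimage $w$ of $v_\alpha^\lambda$, pull the witness submodule $W$ back to $W' = \psi^{-1}(W)$, verify that $\fg_{+1}\cdot w \subseteq W'$ while $w \notin W'$, and conclude $w$ is primitive so $w = c' v_\alpha^\mu$ by the multiplicity-one property of primitive vectors. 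The two proofs lean on two different faces of the same fact — the paper uses multiplicity one of $\bbL_\alpha$ as a composition factor of $K_\mu$ (via Theorem~\ref{thm:CompositionSeriesKacModule}), while you use multiplicity one of primitive vectors of weight $\alpha$ in $K_\mu$ — and yours has the small advantage of being direct rather than by contradiction, and of not needing the Grothendieck group at all.
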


\begin{proof}
By the last paragraph, we only need to show that $\psi(v_{\alpha}^{\mu}) \neq 0$. In fact, if $\psi(v_{\alpha}^{\mu}) = 0$, then the submodule generated by $v_{\alpha}^{\mu}$ will be in $\ker \psi$. If we let $[M]$ denote the class of a $\fg$-module $M$ in the Grothendieck group $K_0(\fg)$ of finite dimensional representations of $\fg$, then the coefficient of $[\bbL_{\alpha}]$ is $1$ in $[\ker \psi]$. But the coefficient of $[\bbL_{\alpha}]$ is $1$ in $[K_{\mu}]$. Hence the coefficient of $[\bbL_{\alpha}]$ is $0$ in $[\psi(K_{\mu})]$, which is isomorphic to the submodule generated by $v_{\mu}^{\lambda}$ in $K_{\lambda}$. This contradicts the fact the $v_{\mu}^{\lambda}$ generates the primitive vector $v_{\alpha}^{\lambda}$. 
\end{proof}

Now we identify primitive weights of $K_{\mu}$ (resp. $K_{\lambda}$) with $\mu$-admissible (resp. $\lambda$-admissible) Dyck patterns. Assume $\mu = \lambda(\bbD)$ where $\bbD = (D_0) = (\{(a,b)\})$ and $(a,b)$ is a corner of $\mu$. 

\begin{lem} \label{lemma:image}
Assume that $\alpha$ is a primitive weight of $K_{\mu}$. So $\alpha = \mu(\bbD)$ where $\bbD = (D_1,\ldots,D_r)$ is a $\mu$-admissible Dyck pattern. On the one hand, if  $\bbD_1 = (D_0,D_1,\ldots,D_r)$ is a $\lambda$-admissible Dyck pattern where $\alpha = \lambda(\bbD_1)$, then we have that $\psi(v_{\mu(\bbD)}^{\mu}) = v_{\lambda(\bbD_1)}^{\lambda}$.  On the other hand, if $(D_0,D_1,\ldots,D_r)$ is not a $\lambda$-admissible Dyck patterns, then $\psi(v_{\mu(\bbD)}^{\mu}) = 0$. 
\end{lem}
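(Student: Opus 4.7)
The plan is to combine Lemma~\ref{lemma:image1} with the uniqueness of Dyck-pattern decompositions of primitive weights from Theorem~\ref{thm:CompositionSeriesKacModule}, together with a structural description of the submodule $\psi(K_\mu) \subseteq K_\lambda$.

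For the first assertion, assume $\bbD_1 = (D_0, D_1, \ldots, D_r)$ is $\lambda$-admissible; then $\alpha = \mu(\bbD) = \lambda(\bbD_1)$ is automatically a primitive weight of $K_\lambda$, so there is a unique primitive vector $v_\alpha^\lambda \in K_\lambda$. The first step is to show that $v_\alpha^\lambda$ lies in the submodule of $K_\lambda$ generated by $v_\mu^\lambda$, i.e., inside $\psi(K_\mu)$. Once this is in place, Lemma~\ref{lemma:image1} directly produces a nonzero scalar $c$ with $\psi(v_\alpha^\mu) = c \cdot v_\alpha^\lambda$, and we fix a normalization of the primitive vectors so that $c = 1$.

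For the second assertion, suppose $(D_0, D_1, \ldots, D_r)$ is not $\lambda$-admissible, and argue by contradiction from $\psi(v_\alpha^\mu) \neq 0$. By the discussion preceding Lemma~\ref{lemma:image1}, $\psi(v_\alpha^\mu)$ would be a primitive vector of weight $\alpha$ in $\psi(K_\mu) \subseteq K_\lambda$, so $\alpha$ must be a primitive weight of $K_\lambda$. By the multiplicity-one statement of Theorem~\ref{thm:CompositionSeriesKacModule}, there is then a unique $\lambda$-admissible Dyck pattern $\bbE$ with $\lambda(\bbE) = \alpha$, and by hypothesis $\bbE \neq (D_0, D_1, \ldots, D_r)$. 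I will verify that $\bbE$ cannot contain $D_0$ as one of its Dyck paths (otherwise, together with $\lambda(\bbE) = \mu(\bbD)$, uniqueness would force $\bbE = (D_0, D_1, \ldots, D_r)$), so $v_\alpha^\lambda$ is not generated by $v_\mu^\lambda$ and in particular does not lie in $\psi(K_\mu)$, contradicting the assumption.

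The main obstacle is the structural description of $\psi(K_\mu)\subseteq K_\lambda$: namely, that its primitive vectors correspond exactly to the $\lambda$-admissible Dyck patterns containing $D_0$ as one of their Dyck paths. This links the submodule lattice of $K_\lambda$ to the combinatorics of Dyck patterns and will likely require a close examination of the unique Loewy filtration of $K_\lambda$ (recalled after Definition~\ref{defn:Loewy}) together with explicit $E$-module computations tracking how the single-box path $D_0$ propagates through the successive Loewy layers.
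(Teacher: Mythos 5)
Your outline reproduces the logical architecture of the paper's proof: in both directions everything hinges on knowing precisely which primitive weights of $K_\lambda$ are generated by the primitive vector $v_\mu^\lambda$, and that this is governed by whether the corresponding $\lambda$-admissible Dyck pattern contains the singleton $D_0=\{(a,b)\}$. Granting that structural description, your argument for each assertion is sound --- the first part follows from Lemma~\ref{lemma:image1}, and for the second part, removing $D_0$ from the putative $\lambda$-admissible pattern for $\alpha$ yields (by fact (2) recorded after \ref{def:lambdaB}, since there are no bullets) a $\mu$-admissible pattern different from $\bbD$ for the same weight, violating multiplicity one in $K_\mu$; this is the same contradiction the paper produces, just packaged slightly differently.

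The genuine gap is the structural description itself, which you correctly flag as the main obstacle. The paper does not re-derive it: it cites \cite{SZ}[Theorem 5.18], which characterizes, in terms of Dyck patterns, exactly which primitive vectors lie in the submodule of a Kac module generated by a given primitive vector --- namely, $\lambda(\bbD)$ is generated by $v_\mu^\lambda$ with $\mu=\lambda(\{(a,b)\})$ if and only if one of the $D_i$ equals $\{(a,b)\}$. Your proposed alternative --- a close examination of the Loewy filtration together with explicit $E$-module computations tracking how $D_0$ propagates --- is left entirely unrealized, is not something the paper attempts, and would be considerably harder than invoking the Su--Zhang theorem. As written, your proposal is therefore a correct plan but not a proof: the load-bearing claim is acknowledged but not established. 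Substituting a citation to \cite{SZ}[Theorem 5.18] for that speculative re-derivation would make the remainder of your argument a valid rephrasing of the paper's proof.
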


\begin{proof}
First of all, it follows from \cite{SZ}[Theorem 5.18] that a primitive weight $\lambda(\bbD)$ where $\bbD = (D_1,\ldots,D_r)$ is generated by the associated primitive vector of $\mu = \lambda((\{ (a,b) \}))$ if and only if $D_i = \{ (a,b) \}$ for some $i$. Without lost of generality, we assume $D_1 = \{ (a,b) \}$.  Hence if $\bbD_1$ is a $\lambda$-admissible Dyck pattern, then by Lemma \ref{lemma:image1}, we have $\psi(v_{\mu(\bbD)}^{\mu}) = v_{\lambda(\bbD_1)}^{\lambda}$. Otherwise, if $\bbD_1$ is not $\lambda$-admissible and $\psi(v_{\mu(\bbD)}^{\mu}) \neq 0$, then $\psi(v_{\mu(\bbD)}^{\mu}) = v_{\lambda(\bbD_2)}^{\lambda}$ where $\bbD_2 = (\{ (a,b) \},D_2',\ldots,D_l')$ is a $\lambda$-admissible Dyck pattern and $\alpha = \lambda(\bbD_2)$. Now we look at the $\mu$-admissible Dyck pattern $\bbD_3 = (D_2',\ldots,D_l')$. We have $\alpha = \mu(\bbD) = \mu(\bbD_3)$. Then in $K_{\mu}$, the multiplicity of $\alpha$ will be $m_{\alpha} \geq 2$, which contradicts the fact that the multiplicity of any primitive weight in any Kac module is $1$. Therefore $\psi(v_{\mu(\bbD)}^{\mu}) = 0$.
\end{proof}

For the simplicity of notations, we assume that $I = I_{\lambda}$ is the principal $\GL$-invariant ideal in the coordinate ring of $\bbC^{m \times n}$ generated by a single summand $S_{\lambda} \bbC^m \otimes S_{\lambda} \bbC^n$. Via BGG correspondence, $\tilde{\bR}(I)_i = \dsum_{\substack{|\mu| = i \\ \lambda \leq \mu}} K_{\mu}$.  Now we can put the unique Loewy filtration \cite{BS} (the definition of Loewy filtration could be found in  ~\ref{defn:Loewy}) on all the Kac modules in $\tilde{\bR}(I)$. In other words, let $r = min\{m,n\}$, for $K_{\mu} \in \tilde{\bR}(I)$, assume $\{0\} = K_{\mu}^0 \subset K_{\mu}^1 \subset \ldots \subset K_{\mu}^r \subset K_{\mu}^{r+1} = K_{\mu} $ is the unique Loewy filtration on $K_{\mu}$. The $p$-th filtration subgroup of $\tilde{\bR}(I)_i$  is $F^p(\tilde{\bR}(I)_i) = \dsum_{\substack{|\mu| = i \\ \lambda \leq \mu}} K_{\mu}^p$. This makes $\tilde{\bR}(I)$ a filtered complex of $\fg$-modules. If we look at the spectral sequence associated to this filtered complex, the main proposition is as follows:

\begin{prop} \label{prop:degenerate}
The above spectral sequence degenerates on the second page.
\end{prop}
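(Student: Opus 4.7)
The plan is to use Lemma~\ref{lemma:image} to explicitly identify $\ker d_1$ in terms of Dyck pattern combinatorics, and then show that every $d_1$-cocycle lifts to an actual $d$-cycle via its primitive-vector representative, which forces $d_r = 0$ for all $r \geq 2$.

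\textbf{Step 1 (the $E_1$ page).} Using Lemma~\ref{lemma:image} and the fact that each socle-filtration piece $K_\mu^p$ is the $\fg$-submodule of $K_\mu$ generated by the primitive vectors in the $p$ lowest Loewy layers, one checks that each $\psi_\mu^\nu \colon K_\mu \to K_\nu$ satisfies $\psi_\mu^\nu(K_\mu^p) \subseteq K_\nu^{p-1}$. Hence $d(F^p) \subseteq F^{p-1}$, so $d_0 = 0$ on $E_0$, and $E_1 = E_0 = \bigoplus_{\mu,p} K_\mu^p/K_\mu^{p-1}$ with $d_1$ induced by $d$ on associated graded pieces.

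\textbf{Step 2 (lifting $d_1$-cocycles to $d$-cycles).} The complex $E_1$ decomposes by $\fg_0$-weight $\alpha$; on the $\alpha$-isotypic component in a fixed filtration layer, a general class has the form $\bar v = \sum_\mu c_\mu\, \bar v_\alpha^\mu$, summed over partitions $\mu$ with $\lambda \leq \mu$ such that $\alpha$ is a primitive weight of $K_\mu$ in that layer, with Dyck pattern $\bbD_\mu$ satisfying $\alpha = \mu(\bbD_\mu)$. By Lemma~\ref{lemma:image}, for each corner $(a,b)$ of $\mu$ with $\nu := \mu \setminus \{(a,b)\}$, one has $\psi_\mu^\nu(v_\alpha^\mu) = c_{\mu,\nu}\, v_\alpha^\nu$ for a scalar $c_{\mu,\nu}$ that is nonzero precisely when $(\{(a,b)\}) \cup \bbD_\mu$ is $\nu$-admissible. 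Taking the primitive-vector lift $v := \sum_\mu c_\mu\, v_\alpha^\mu$, a direct calculation gives
\[
d(v) \;=\; \sum_\nu \Big(\sum_\mu c_\mu\, c_{\mu,\nu}\Big)\, v_\alpha^\nu,
\]
and analogously $d_1(\bar v) = \sum_\nu \big(\sum_\mu c_\mu\, c_{\mu,\nu}\big)\, \bar v_\alpha^\nu$. By the multiplicity-one property of primitive weights in Kac modules, the vectors $v_\alpha^\nu$ are canonical nonzero elements of the respective $K_\nu$'s, so $d_1(\bar v) = 0$ is equivalent to $d(v) = 0$. Hence every $d_1$-cocycle lifts to a genuine $d$-cycle.

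\textbf{Step 3 and main obstacle.} For any class $[\bar v] \in E_2$, choosing the primitive-vector lift yields $d(v) = 0$ outright, so $d_r([\bar v]) = 0$ for every $r \geq 2$, giving $E_2 = E_\infty$ and degeneration at the second page. The crux of the argument is the matching of the $d_1$-cocycle and $d$-cycle conditions on the primitive-vector lift, enabled by the scalar-level statement of Lemma~\ref{lemma:image} together with the multiplicity-one property. The main bookkeeping difficulty is tracking the admissibility of $(\{(a,b)\}) \cup \bbD_\mu$ uniformly across all corners $(a,b)$ and all partitions $\mu$ contributing to a given weight $\alpha$; once this combinatorial bookkeeping is in place, the algebraic argument above goes through cleanly.
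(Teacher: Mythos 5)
Your approach is genuinely different from the paper's. The paper shows that the $E_1$ page decomposes, for each primitive weight, into a subcomplex of Koszul type indexed by subsets of a finite set, and then invokes an elementary exactness lemma (Lemma~\ref{lemma:exact}) to conclude that only the terms whose Dyck patterns have no removable size-one Dyck paths survive to $E_2$; a separate use of Lemma~\ref{lemma:image} then shows those survivors have zero outgoing differentials. You instead argue directly that every $d_1$-cocycle admits a canonical lift, via primitive vectors, to an honest $d$-cocycle, which kills $d_r$ for all $r\geq 2$ at once. Your route is shorter and avoids the Koszul bookkeeping, but it also produces strictly less information: it establishes degeneration without producing the explicit description of the $E_2$ page that the paper extracts from the same analysis and then uses in the proof of Theorem~\ref{thm: main}.

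There is, however, a gap at the crux of your Step 2. The claimed equivalence $d_1(\bar v)=0 \iff d(v)=0$ for the primitive-vector lift is not a consequence of the multiplicity-one property, as you assert. Multiplicity-one guarantees that each $v_\alpha^\nu$ is a well-defined nonzero element of $K_\nu$; it does not guarantee that its image $\bar v_\alpha^\nu$ in the associated graded piece $F^{p-1}/F^{p-2}$ is nonzero. For that you need to know that whenever $\psi_\mu^\nu(v_\alpha^\mu)=c\,v_\alpha^\nu\neq 0$ and $v_\alpha^\mu$ lies in Loewy layer $p$ of $K_\mu$, the image $v_\alpha^\nu$ lies in Loewy layer exactly $p-1$ of $K_\nu$ and not lower; equivalently, that the Loewy layer of $v_{\mu(\bbD)}^\mu$ changes by exactly one when a size-one Dyck path is adjoined to $\bbD$. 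If some $v_\alpha^\nu$ were to drop two or more layers, $\bar v_\alpha^\nu$ would vanish in $E_1$, $d_1(\bar v)$ could be zero with $d(v)$ nonzero, and your lifting argument would break. This fact is true and is also implicitly relied on in the paper's proof (it is what places the $\bbL_{\mu_A(\bbD_A)}$ terms in consecutive filtration degrees so that the Koszul complex lives entirely on the $E_1$ page), but in your argument it is the load-bearing step, and it deserves an explicit justification rather than being attributed to multiplicity-one.
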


Before the proof of the above proposition, we will need the following lemma first.

\begin{lem} \label{lemma:exact}
Suppose we have a complex of vector space $\bF_{\bullet}$ where $F_i = \bC^{\binom{n}{i}} = \dsum_{\substack{A \subseteq \{ 1, \ldots, n \} \\ |A| = i}} \bbC_A $ where the map $F_i \to F_{i+1}$ has the property that the map $\bbC_A \to \bbC_B$ is nonzero if and only if $A \subseteq B$. Then the complex $\bF_{\bullet}$ is exact.
\end{lem}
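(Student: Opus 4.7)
The plan is to proceed by induction on $n$. The base case $n=1$ is immediate: the complex reduces to $\bbC_{\emptyset} \to \bbC_{\{1\}}$ with the single map nonzero by hypothesis, hence an isomorphism, so $\bF_\bullet$ is exact.

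For the inductive step, I would split each $F_i$ according to whether its index subset contains $n$. Let $F_i^1 \subseteq F_i$ be the span of those $\bbC_A$ with $n \in A$, and identify $F_i/F_i^1$ with $F_i^0 = \dsum_{n \notin A,\, |A|=i} \bbC_A$. Since the differential sends $\bbC_A$ only into $\bbC_B$ with $A \subseteq B$, and containment preserves membership of $n$, the subspaces $F_\bullet^1$ form a subcomplex, yielding a short exact sequence of complexes
\[
0 \longrightarrow F_\bullet^1 \longrightarrow F_\bullet \longrightarrow F_\bullet^0 \longrightarrow 0.
\]

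The key point is that both $F_\bullet^0$ and $F_\bullet^1$ satisfy the hypotheses of the lemma with $n$ replaced by $n-1$. For $F_\bullet^0$, indexed by subsets of $\{1,\ldots,n-1\}$, the inherited (quotient) differential $\bbC_A \to \bbC_B$ (with $n \notin A,B$) agrees with the corresponding entry of the original differential and is therefore nonzero exactly when $A \subseteq B$. For $F_\bullet^1$, the bijection $A \leftrightarrow A \setminus \{n\}$ re-indexes $F_i^1$ as the $(i-1)$-st term of an analogous complex for the ground set $\{1,\ldots,n-1\}$, and the required support pattern of the differential transfers directly; a homological shift does not affect exactness. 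By induction both $F_\bullet^0$ and $F_\bullet^1$ are exact, and the associated long exact sequence in cohomology then forces $\bF_\bullet$ to be exact as well.

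The only delicate step is the bookkeeping that confirms the sub/quotient complexes inherit the precise ``nonzero iff $A \subseteq B$'' property on their respective index sets, together with $d^2=0$; both of these follow because the relevant matrix entries are simply inherited from the ambient differential. Apart from this verification, the argument is a routine application of the long exact sequence of a short exact sequence of complexes.
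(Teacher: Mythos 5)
Your proof is correct and takes essentially the same approach as the paper: both induct on $n$, split the complex into the subcomplex of summands indexed by subsets containing $n$ and the quotient complex indexed by subsets omitting $n$, observe that both inherit the hypotheses on the ground set $\{1,\dots,n-1\}$ (the subcomplex after a degree shift), and conclude via the long exact sequence.
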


%\begin{rmk}
%Such complex is isomorphic to the Koszul complex on $n$ generators. Hence it is exact. \steven{Typically the Koszul complex is a complex of free modules over a polynomial ring, and is not finite dimensional, so this remark is a little confusing.}
%\end{rmk}

\begin{proof}
We do induction on $n$. When $n = 1$, this is trivial. Let $0 \to \bK_{\bullet} \to \bF_{\bullet} \to \bQ_{\bullet} \to 0$ be a short exact sequence where $Q_i = \dsum_{\substack{A \subseteq \{1, \ldots, n-1 \} \\ |A| = i}} \bbC_A$. Then $\bQ_{\bullet}$ is exact by the induction hypothesis. Now $K_i = \dsum_{\substack{\{n\} \subseteq A \subseteq \{1, \ldots, n \} \\ |A| = i}} \bbC_A = \dsum_{\substack{A' \subseteq \{ 1,\ldots,n-1 \} \\ |A'| = i-1}} \bbC_{A'}$ where $\bbC_{A'} \to \bbC_{B'}$ is nonzero if and only if $A \subseteq B$ if and only if $A' \subseteq B'$. Hence by the induction hypothesis, $\bK_{\bullet}$ is exact. By the associated long exact sequence on homology, we have $\bF_{\bullet}$ is also exact.
\end{proof}

%\begin{lem} \label{lemma:les}
%Let $\bF_{\bullet}$ be a complex as in Lemma~\ref{lemma:exact}. Let $\bP_{\bullet}$ be a subcomplex of $\bF_{\bullet}$ generated by $\bbC_{\{1\}}, \ldots, \bbC_{\{k\}}$ for some $k$. In other words, we have $P_i = \dsum_{\substack{A \subseteq \{1,\ldots,n\} \\ |A| = i \\ \{1,\ldots,k\} \cap A \neq \emptyset}} \bbC_{A}$. Let $\bQ_{\bullet}$ be the associated quotient complex, then $\bQ_{\bullet}$ is exact.
%\end{lem}

%\begin{proof} 
%Note that $Q_i = \dsum_{\substack{A \subseteq \{1,\ldots,n\} \\ |A| = i \\ \{1,\ldots,n\} \cap A = \emptyset}} \bbC_{A} = \dsum_{\substack{A \subseteq \{k+1,\ldots,n\} \\ |A| = i }} \bbC_{A}$ satisfying the hypothesis in Lemma ~\ref{lemma:exact}. Hence $\bQ_{\bullet}$ is exact.
%\end{proof}

\begin{proof}[Proof of Proposition~\ref{prop:degenerate}]
 Note that all objects in $E^r_{p,q}$ are semisimple $\fg$-modules $\forall p,q,r$. Furthermore, on the first page of this spectral sequence, all primitive weight vectors becomes highest weight vectors. The differential in $\tilde{\bR}(I)$ will decrease the filtration index by $1$. 
 
 On the first page, we concentrate on $\bbL_{\mu(\bbD)} \in E^1_{p,|\mu|-p}$ where $\lambda \leq \mu$ and $\bbD = (D_1,,\ldots,D_k,\ldots,D_l)$ is a $\mu$-admissible Dyck pattern. Assume $|D_i| = 1$ for all $1 \leq i \leq k$ and $|D_j| \geq 3$ for all $j > k$. Define $\mu_0 = \mu(\bbD ')$ where $\bbD ' = (D_1,\ldots,D_k)$. Then $\mu_0(\bbD_0) = \mu(\bbD)$ where $\bbD_0 = (D_{k+1},\ldots,D_{l})$ is a $\mu_0$-admissible Dyck pattern. Define $\mu_n$ to be the minimal tableaux such that $\lambda \leq \mu_n$ and $\mu_n(\bbD_n) = \mu(\bbD)$ where $\bbD_n = (D_{k+1},\ldots,D_{l},D_1 ',\ldots,D_n ')$ is a $\mu_n$-admissible Dyck pattern and $|D_i '| = 1$ for all $1 \leq i \leq n$. Note that we can assume that $D_i ' = D_i$ for all $1 \leq i \leq k$. Each $D_i '$ will consist of a corner of $\mu_0$. In other words, $(D_i ') \mid_{1 \leq i \leq n}$ is the maximal collection of size $1$ Dyck paths $\{ (a,b) \}$ such that $(a,b) \in \mu_0$ and exact one of the two situations happens: 
 \begin{enumerate}
     \item No box located directly N,E,NE from $(a,b)$ is in $\mu_0(\bbD_0)$.
     \item There exists an $i$ with $k+1 \leq i \leq l$ such that all boxes located directly N,E,NE from $(a,b)$ are in $D_i$.
 \end{enumerate}
 Let $A \subseteq \{1,\ldots,n\}$. Define $\mu_A = \mu_n(\bbD_{A^c}')$ where $\bbD_{A^c} ' = (D_i ') \mid_{\substack{i \notin A \\ 1 \leq i \leq n}}$ is a $\mu_n$-admissible Dyck pattern. Also define $\bbD_A = (D_{k+1},\ldots,D_l,D_i ') \mid_{\substack{i \in A \\ 1 \leq i \leq n}}$. This is a $\mu_A$-admissible Dyck pattern. Using this notation, we have $\bbD = \bbD_{\{1,\ldots,k\}}$ and $\mu = \mu_{\{1,\ldots,k\}}$. 
 
Here is an example where $m=n=4$, $\mu = (4,3,1^2)$, $\lambda = (3,2,1)$ and $\mu(\bbD) = (5^2,2,1)$. In this example, we have $n=3$.

 \begin{minipage}{.50\textwidth}
\centering
\begin{tikzpicture}[x=\unitsize,y=\unitsize,baseline=0]
\tikzset{vertex/.style={}}%
\tikzset{edge/.style={very thick}}%
\draw[dotted] (0,0) -- (14,0);
\draw[dotted] (0,2) -- (14,2);
\draw[dotted] (0,4) -- (14,4);
\draw[dotted] (0,6) -- (14,6);
\draw[dotted] (0,8) -- (14,8);
\draw[dotted] (2,-2) -- (2,10);
\draw[dotted] (4,-2) -- (4,10);
\draw[dotted] (6,-2) -- (6,10);
\draw[dotted] (8,-2) -- (8,10);
\draw[dotted] (10,-2) -- (10,10);
\draw[dotted] (12,-2) -- (12,10);
\draw[edge] (2,0) -- (10,0);
\draw[edge] (2,2) -- (10,2);
\draw[edge] (2,4) -- (8,4);
\draw[edge] (2,6) -- (4,6);
\draw[edge] (2,8) -- (4,8);
\draw[edge] (2,0) -- (2,8);
\draw[edge] (4,0) -- (4,8);
\draw[edge] (6,0) -- (6,4);
\draw[edge] (8,0) -- (8,4);
\draw[edge] (10,0) -- (10,2);
\draw[red, line width=5pt] (9,3) -- (11,3) -- (11,1) ;
\draw[red, line width=5pt] (4.6,5) -- (5.4,5) ;
\end{tikzpicture}%
\captionsetup{labelformat=empty}
\captionof{figure}{$\mu = \mu_{\{1\}} = (4,3,1,1) \\ \mu(\bbD) = \mu_{\{1\}}(\bbD_{\{1\}}) = (5,5,2,1)$}
\end{minipage}
 \begin{minipage}{.50\textwidth}
\centering
\begin{tikzpicture}[x=\unitsize,y=\unitsize,baseline=0]
\tikzset{vertex/.style={}}%
\tikzset{edge/.style={very thick}}%
\draw[dotted] (0,0) -- (14,0);
\draw[dotted] (0,2) -- (14,2);
\draw[dotted] (0,4) -- (14,4);
\draw[dotted] (0,6) -- (14,6);
\draw[dotted] (0,8) -- (14,8);
\draw[dotted] (2,-2) -- (2,10);
\draw[dotted] (4,-2) -- (4,10);
\draw[dotted] (6,-2) -- (6,10);
\draw[dotted] (8,-2) -- (8,10);
\draw[dotted] (10,-2) -- (10,10);
\draw[dotted] (12,-2) -- (12,10);
\draw[edge] (2,0) -- (8,0);
\draw[edge] (2,2) -- (8,2);
\draw[edge] (2,4) -- (8,4);
\draw[edge] (2,6) -- (4,6);
\draw[edge] (2,0) -- (2,6);
\draw[edge] (4,0) -- (4,6);
\draw[edge] (6,0) -- (6,4);
\draw[edge] (8,0) -- (8,4);
\draw[red, line width=5pt] (9,3) -- (11,3) -- (11,1) ;
\draw[red, line width=5pt] (4.6,5) -- (5.4,5) ;
\draw[red, line width=5pt] (8.6,1) -- (9.4,1) ;
\draw[red, line width=5pt] (2.6,7) -- (3.4,7) ;
\end{tikzpicture}%
\captionsetup{labelformat=empty}
\captionof{figure}{$\mu_n = \mu_3 = \mu_{\{1,2,3\}} = (3,3,1) \\ \mu_{\{1,2,3\}}(\bbD_{\{1,2,3\}}) = (5,5,2,1)$}
\end{minipage}

 According to Lemma~\ref{lemma:image}, the component $\bbL_{\mu}(\bbD)$ fits into the following direct summand of the first page:
 \begin{align*}
 0 \to \bbL_{\mu_{\emptyset}(\bbD_{\emptyset})} \to \dsum_{\substack{1 \leq i \leq n}} \bbL_{\mu_{\{i\}}(\bbD_{\{i\}})} \to \ldots 
 \to \dsum_{\substack{|A| = i \\ A \subseteq \{1,\ldots,n\}}} \bbL_{\mu_A(\bbD_A)} \to \ldots \to \bbL_{\mu_{\{1,\ldots,n\}}(\bbD_{\{1,\ldots,n\}})} \to 0.      
 \end{align*}
Moreover, if $|A| = i$ and $|B| = i+1$, then $\bbL_{\mu_A(\bbD_A)} \to \bbL_{\mu_B(\bbD_B)}$ is nonzero if and only if $\mu_B \leq \mu_A$ if and only if $A \subseteq B$. Hence according to Lemma~\ref{lemma:exact}, this complex is exact if and only of $n \geq 1$. Therefore, if $n \geq 1$, the component $\bbL_{\mu(\bbD)}$ will not appear on the second page of the spectral sequence.

In the case when $n = 0$, first of all, we have that $\bbD$ is a $\mu$-admissible Dyck pattern with all Dyck paths of size at least $3$. Furthermore, for any $\nu$ with $|\nu| = |\mu| - 1$ and $\nu \leq \mu$, assume $\mu = \nu(\bbD_{\nu})$ where $\bbD_{\nu} = (D_{\nu})$ and $|D_{\nu}| = 1$ , $\bbD_1 := (D_1,\ldots,D_l,D_{\nu})$ is either not a $\nu$-admissible Dyck pattern or $\lambda \nleq \nu$. According to Lemma~\ref{lemma:image}, the corresponding primitive vector of $\mu(\bbD)$ maps to $0$ in the original complex $\tilde{R}(I_{\lambda})$. Hence the corresponding $\fg$-module $\bbL_{\mu(\bbD)}$ will map to $0$ on the first and second page in the spectral sequence. And all differentials on the second page of this spectral sequence is $0$. Therefore the spectral sequence degenerates on the second page.
\end{proof}

Now we restate the main conjecture in \cite{RW} and give a proof here.

Let $\lambda$ be a partition with $l(\lambda) \leq n$. We consider the set of $\lambda$-admissible augmented Dyck patterns
\begin{align*}
    \bbD = (D_1,\ldots,D_r ; \bbB)
\end{align*}
with no Dyck path of length one, and $l(\lambda(\bbD)) \leq n$.
\begin{align*}
\cA(\lambda;n) = \{ \bbD \text{ a $\lambda$-admissible Dyck pattern} \mid |D_i| \geq 3 \text{ for all } 1 \leq i \leq r, \text{ and } \lambda(\bbD)_j = 0 \text{ for } j > n   \}
\end{align*}

\begin{thm} \label{thm: main}
Suppose $m \geq n$ are positive integers, $S$ is the coordinate ring of $\bbC^{m \times n}$, $\lambda$ is a partition with at most $n$ parts, and $I_{\lambda} \subseteq S$ is the corresponding principal $\GL$-equivariant ideal. For $b \geq 0$ we have the following equality in the Grothendieck group $K_0(\fg)$ of finite dimensional representations of $\fg$.
\begin{align}
    \DS [H_{|\lambda|+b}(\tilde{\bR}(I_{\lambda}))] = \sum_{\substack{\bbD \in \cA(\lambda;n) \\ b(\bbD) = b}} [\bbL_{\lambda(\bbD)}].
\end{align}
\end{thm}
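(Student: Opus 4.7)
The plan is to combine Proposition~\ref{prop:degenerate} (degeneration on $E^2$) with a combinatorial bijection between surviving classes on $E^\infty$ and $\cA(\lambda;n)$. By Proposition~\ref{prop:degenerate},
\[
[H_{|\lambda|+b}(\tilde{\bR}(I_\lambda))] = \sum [\bbL_\alpha]
\]
in $K_0(\fg)$, summed over classes surviving on the $E^2$ page at total complex degree $|\lambda|+b$. Inspecting that proof, a simple module $\bbL_{\mu(\bbD_\mu)}$ indexed by $(\mu, \bbD_\mu)$ with $\lambda \leq \mu$, $|\mu| = |\lambda|+b$, $\bbD_\mu \in \cK(\mu;n)$ survives precisely when (a) every Dyck path in $\bbD_\mu$ has length $\geq 3$, and (b) for every corner $(a,b)$ of $\mu$ either $(a,b) \in \lambda$, or appending $\{(a,b)\}$ as a size-one Dyck path to $\bbD_\mu$ fails to produce a $(\mu \setminus \{(a,b)\})$-admissible Dyck pattern.

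The main step is therefore to biject surviving pairs $(\mu, \bbD_\mu)$ at position $|\lambda|+b$ with augmented Dyck patterns $\bbD \in \cA(\lambda;n)$ satisfying $b(\bbD) = b$. The natural assignment is $(\mu, \bbD_\mu = (D_1, \ldots, D_r)) \mapsto (D_1, \ldots, D_r;\, \mu \setminus \lambda)$, with inverse $(D_1, \ldots, D_r; \bbB) \mapsto (\lambda \cup \bbB,\, (D_1, \ldots, D_r))$. Verifying this requires three observations: (i) the disjointness and partition axioms of $\lambda$-admissibility for $\bbD$ correspond, via $\lambda \subseteq \mu$, to $\mu$-admissibility of $\bbD_\mu$; (ii) condition~(4) of $\lambda$-admissibility is automatic---if $(a,b) \in \mu \setminus \lambda$ were N/E/NE of $(x,y) \in D_i$, the partition property of $\mu$ would force $(x,y) \in \mu$ in each of the three cases, contradicting $\mu \cap \mathrm{supp}(\bbD_\mu) = \emptyset$; and (iii) the survival condition~(b) is equivalent to $\mu \setminus \lambda$ decomposing as a union of head and tail bullet sets $B_i$ attached to the $D_i$ in the prescribed way. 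Under the bijection $\mu(\bbD_\mu) = \lambda(\bbD)$ and $|\mu| - |\lambda| = b(\bbD)$, so summing yields the stated formula.

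Claim~(iii) is the principal obstacle. The key combinatorial input is that the three N/E/NE positions $(a,b+1)$, $(a+1,b)$, $(a+1,b+1)$ of a corner $(a,b)$ of $\mu$ all lie outside $\mu$; non-extractability then forces some $D_i$ to occupy at least one (but not all) of those positions. A case analysis on which of them lie in $D_i$, combined with the east/south constraint on Dyck paths, identifies $(a,b)$ as either a head bullet (when the occupied position is $(a+1,b)$, the start of $D_i$) or a tail bullet (when it is $(a,b+1)$, the end of $D_i$) of $D_i$. Non-corner bullets in $\mu \setminus \lambda$ follow by propagation along the containing row or column once the corner-bullet is pinned down, and the reverse implication (augmented structure implies non-extractability) is a direct verification from the definition of $\lambda$-admissibility.
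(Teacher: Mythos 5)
Your plan is correct and takes essentially the same route as the paper: degenerate the spectral sequence at $E^2$ via Proposition~\ref{prop:degenerate}, read off the surviving simple composition factors as pairs $(\mu,\bbD')$ with the ``no extractable corner'' condition (the paper's set $\cB(\lambda;b,n)$), and then exhibit the bijection $(\mu,\bbD')\leftrightarrow(D_1,\dots,D_r;\mu\setminus\lambda)$ with $\mu=\lambda(\bbB)$. Your claim (iii) — that non-extractability is equivalent to $\mu\setminus\lambda$ decomposing into head/tail bullet sets attached to the $D_i$ — is precisely the paper's Lemma~\ref{lem:1-1}, and your sketch of the case analysis (using the Dyck-path east/south constraints to force the occupied N or E neighbor to be the start or end of its path, then propagating bullets along rows/columns) is the same argument the paper carries out by ordering the corners of $\mu$.
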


\begin{proof}
We use the previous spectral sequence to compute $[H_{|\lambda|+b}(\tilde{\bR}(I_{\lambda}))]$. Assume $\bbD' = (D_1,\ldots,D_l)$ and define
\begin{align*}
\cB(\lambda;b,n) = \{ (\mu,\bbD') \mid & \mu \text{ is a partition with } \lambda \leq \mu, |\mu| - |\lambda| = b, l(\mu) \leq n, l(\mu(\bbD')) \leq n. \\
& \bbD' \text{ is a $\mu$-admissible Dyck pattern with } |D_i| \geq 3 \text{ for all } 1 \leq i \leq l. \\
& \text{If } (a,b) \in \mu \text{ is a corner of } \mu, \text{ such that  it satisfies one of } \\
& \text{the following two conditions:} \\ 
&  \text{1. } \exists D_i \in \bbD' \text{ such that all the boxes located directly N,E or NE from } \\
& \text{ $(a,b)$ are in $D_i$.} \\
&  \text{2. There is no box located directly N,E or NE from $(a,b)$ lies in supp$(\bbD')$.} \\ 
& \text{Then } (a,b) \in \lambda \}.
\end{align*}
From the last paragraph of the proof of Proposition~\ref{prop:degenerate}, we have that on the second page of the spectral sequence,
\begin{align*}
\DS E^2_{|\lambda|+b} = \dsum_{p + q = |\lambda|+b} E^2_{p,q} = \dsum_{(\mu,\bbD') \in \cB(\lambda;b,n)} \bbL_{\mu(\bbD')}.    
\end{align*}
And the spectral sequence degenerates on the second page, so we have
\begin{align}
\DS    [H_{|\lambda|+b}(\tilde{\bR}(I_{\lambda}))] = \sum_{(\mu,\bbD') \in \cB(\lambda;b,n)} [\bbL_{\mu(\bbD')}].
\end{align}
The proof will be finished by the following Lemma~\ref{lem:1-1}.
\end{proof}

\begin{lem} \label{lem:1-1}
There is a 1-1 correspondence between the multi-sets
\begin{align}
\xymatrix{
    \{ \lambda(\bbD) \mid \bbD \in \cA(\lambda;n), b(\bbD) = b \} &  \longleftrightarrow & \{ \mu(\bbD') \mid (\mu,\bbD') \in \cB(\lambda;b,n)  \}.
}
\end{align}
\end{lem}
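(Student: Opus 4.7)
The plan is to construct mutually inverse maps $\Phi$ and $\Psi$ on the underlying indexing data, in such a way that $\lambda(\bbD)=\mu(\bbD')$ whenever $\Phi(\bbD)=(\mu,\bbD')$; this immediately yields the asserted multi-set equality.

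In the forward direction, given $\bbD=(D_1,\ldots,D_r;\bbB)\in\cA(\lambda;n)$ with $b(\bbD)=b$, I will set $\mu:=\lambda(\bbB)=\lambda\sqcup\bbB$ and $\bbD':=(D_1,\ldots,D_r)$. That $\mu$ is a partition with $\lambda\leq\mu$, $|\mu|-|\lambda|=b$, and $l(\mu)\leq n$ follows from the first fact listed after~\ref{def:lambdaB} and from $\lambda$-admissibility of $\bbD$. The Dyck paths $D_i$ all have length at least $3$ by definition of $\cA(\lambda;n)$, so the only nontrivial checks are that $\bbD'$ is $\mu$-admissible and that the corner condition appearing in $\cB(\lambda;b,n)$ holds. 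The first follows because conditions (1)--(3) in the definition of $\lambda$-admissibility depend only on the $D_i$'s and on the Young-diagram shape $\lambda\cup\bbB=\mu$ that they must be disjoint from and join into a partition with, so they transfer directly to $\mu$-admissibility once the bullets are absorbed into the base partition. For the corner condition, note that any corner $(a_0,b_0)$ of $\mu$ not already in $\lambda$ must lie in $\bbB$; since the bullets of $\bbD$ are by construction head/tail strips attached to the first or last box of some $D_i$, each such corner is immediately west of the first box of some $D_i$ or immediately south of the last box of some $D_i$, which violates both of the two excluded configurations in the definition of $\cB(\lambda;b,n)$.

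For the inverse map, given $(\mu,\bbD')\in\cB(\lambda;b,n)$ with $\bbD'=(D_1,\ldots,D_r)$, I will set $\bbB:=\mu\setminus\lambda$ and define $\Psi(\mu,\bbD'):=(D_1,\ldots,D_r;\bbB)$. The key step is to show that $\bbB$ admits a decomposition $\bbB=B_1\cup\cdots\cup B_r$ with each $(D_i,B_i)$ an augmented Dyck path. Using the corner condition in $\cB$, every corner of $\mu$ lying in $\bbB$ is forced to sit immediately west of the first box of some $D_i$ or immediately south of the last box; from such a corner I then extend maximally westward or southward inside $\bbB$ until reaching $\lambda$, and assign this strip to $B_i^{\mathrm{head}}$ or $B_i^{\mathrm{tail}}$ respectively. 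A straightforward induction on $|\bbB|$ exhausts all of $\mu\setminus\lambda$ this way. Conditions (1), (2) and (4) of $\lambda$-admissibility of $\bbD=(D_1,\ldots,D_r;\bbB)$ then follow from $\mu$-admissibility of $\bbD'$ together with $\lambda\subseteq\mu=\lambda\sqcup\bbB$, and condition (3) is unchanged. Since both compositions $\Phi\circ\Psi$ and $\Psi\circ\Phi$ leave the tuple $(D_1,\ldots,D_r)$ and the set $\mu\setminus\lambda$ untouched, and only toggle between recording this set as bullets or absorbing it into the base partition, they are the identity, and the identity $\lambda(\bbD)=\mu(\bbD')$ is immediate on both sides.

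The main obstacle will be this decomposition step inside $\Psi$: I must rule out ``orphan'' bullets not attachable to any $D_i$, and ensure that the strip emanating from a corner of $\mu\setminus\lambda$ indeed terminates cleanly at a starting or ending box of the corresponding $D_i$. Both assertions rest on a careful case analysis of corners of $\mu$ relative to $\mathrm{supp}(\bbD')$, using precisely the two-part condition defining $\cB(\lambda;b,n)$ (which rules out the first type) and the $\mu$-admissibility of $\bbD'$ (which rules out the second). The non-uniqueness of the expression $\bbB=\bigcup B_i$ noted after~\ref{def:dyckpattern} is harmless, since the augmented Dyck pattern depends only on the pair $((D_1,\ldots,D_r),\bbB)$.
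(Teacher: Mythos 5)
Your proposal follows the same route as the paper's proof: the same pair of mutually inverse maps (toggling the set $\mu\setminus\lambda$ between bullets and boxes of the base partition), and the same key step of decomposing $\bbB=\mu\setminus\lambda$ into head/tail strips by using the corner condition in $\cB$ to show that each corner of $\mu$ lying in $\bbB$ abuts an endpoint of some $D_i$, then extending west or south from it until reaching $\lambda$. The paper fleshes out this decomposition a bit more explicitly (ordering the corners of $\mu$ by first coordinate, its two ``situations'' for a corner outside $\lambda$, and assigning strips $B_{j_c}$ for the intermediate boxes $(c,b_k)$ in each column range), whereas you flag it as the main obstacle and outline it, but the overall strategy is identical.
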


\begin{proof}
Let $\bbD = (D_1,\ldots,D_r;\bbB) \in \cA(\lambda;n)$ and $b(\bbD) = b$. The map goes from left to right sends $\lambda(\bbD)$ to $\mu(\bbD')$ where $\mu = \lambda(\bbB)$ and $\bbD' = (D_1,\ldots,D_r)$. Then it is clear that $\mu$ is a partition with $(\mu,\bbD') \in \cB(\lambda;b,n)$. The map goes from right to left sends $\mu(\bbD')$ where $\bbD' = (D_1,\ldots,D_l)$ to $\lambda(\bbD)$ where $\bbD = (D_1,\ldots,D_l;\bbB = \mu \setminus \lambda)$. The only thing that needs to check here is that $\bbD = (D_1,\ldots,D_l;\bbB = \mu \setminus \lambda)$ is an augmented $\lambda$-admissible Dyck pattern. 

We will order the corners of $\mu$ based on the number of the first coordinate. So $(a_i,b_i) < (a_j,b_j)$ if and only if $a_i < a_j$. Assume $(a_0,b_0) < (a_1,b_1) < \ldots < (a_n,b_n)$ are all the corners of $\mu$. If we consider any $(a_k,b_k)$ which is a corner of $\mu$ such that $(a_k,b_k) \notin \lambda$, then one of the two situations happens:
\begin{enumerate} \label{situations}
    \item There exists $1 \leq i \leq l$ such that $(a_k,b_k+1),(a_k,b_k+2) \in D_i$. But $(a_k+1,b_k+1),(a_k+1,b_k) \notin D_i$.
    \item There exists $1 \leq i \leq l$ such that $(a_k + 1,b_k),(a_k+2,b_k) \in D_i$. But $(a_k+1,b_k+1),(a_k,b_k+1) \notin D_i$.
\end{enumerate}
These two situations might not be disjoint. We will focus on the first one since the second one could be dealt with similarly. If we are in the first situation, assume $(a_m,b_m)$ is the smallest corner of $\mu$ such that $(a_m,b_m) > (a_k,b_k)$ and $(a_m,b_m) \in \lambda$. Pick $B_i$ to be $B_i = \{ (a_k,b_k),(a_k,b_k-1),(a_k,b_k-2),\ldots,(a_k,b_m+1) \} \subseteq \mu \setminus \lambda$, then $(D_i,B_i)$ is an augmented Dyck path. For any $a_{k-1} < c \leq a_k$, if we consider $(c,b_k) \in \mu \setminus \lambda$, we will be in the same situation as $(a_k,b_k)$. In other words, $\exists 1 \leq j_c \leq l$ such that $(c,b_k+1,c,b_k+2) \in D_{j_c}$. But $(c+1,b_k+1),(c+1,b_k) \notin D_{j_c}$. Hence if we pick $B_{j_c} = \{ (c,b_k),(c,b_k-1),(c,b_k-2),\ldots,(c,b_m+1) \} \subseteq \mu \setminus \lambda$, then $(D_{j_c},B_{j_c})$ is an augmented Dyck path. Note that this forces $(a_{k-1},b_{k-1})$ to be either in $\lambda$ or in the first situation. In this way, we break $\bbB = \mu \setminus \lambda$ into subsets (might not be disjoint) $B_j$ such that $(D_j,B_j)$ is a Dyck path for all $j$. Hence we have $\bbD = (D_1,\ldots,D_l;\bbB)$ is a $\lambda$-admissible Dyck pattern.
\end{proof}

\begin{rmk}
Suppose $m \geq n$, for any $\GL$-invariant ideal $I$ in the coordinate ring of $\bbC^{m \times n}$, we can write $I = I_{\lambda_1} + I_{\lambda_2} + \ldots + I_{\lambda_N}$ where $I_{\lambda_i}$ is the principal $\GL$-invariant ideal generated by $S_{\lambda_i} \bbC^n \otimes S_{\lambda_i} \bbC^m$. Furthermore, we assume $I_{\lambda_i}$ and $I_{\lambda_j}$ do not generate each other if $i \neq j$. Or equivalently, we assume $\lambda_i$ and $\lambda_j$ are not comparable if $i \neq j$. 

Consider the following natural long exact sequence: 
\[
0 \longleftarrow I_{\lambda_1} + I_{\lambda_2} + \ldots + I_{\lambda_N} \longleftarrow \dsum_{i,j} I_{\lambda_{i,j}} \longleftarrow \dsum_{i,j,k} I_{\lambda_{i,j,k}} \longleftrightarrow \ldots \longleftarrow I_{\lambda_{1,2,\ldots,N}} \longleftarrow 0
\]
where $\lambda_{i_1,\ldots,i_k}$ is the union of $\lambda_{i_1}$ through $\lambda_{i_k}$. The above long exact sequence gives us an associated spectral sequence relating $\dsum_{\substack{\{i_1,\ldots,i_k\} \subset \{1,\ldots,N\} \\ 1 \leq k \leq N}} \Tor_{\bullet}^S(I_{\lambda_{i_1,\ldots,i_k}},\bbC)$ and $\Tor^S_{\bullet}(I,\bbC)$. A similar argument as in the proof of Proposition~\ref{prop:degenerate} gives us the cancellation of the terms in the spectral sequence. From there, we can get $\Tor^S_{\bullet}(I,\bbC)$ as $\gl(m|n)$ modules. On the other hand, there is no close formula to get $\Tor^S_{\bullet}(I,\bbC)$ for all $\GL$ invariant ideals $I$ as $\gl(m|n)$ modules, unlike the principal invariant case.
\end{rmk}

\begin{comment}
\begin{cor} \label{cor:generalcase}
Suppose $m \geq n$, for any $\GL$-invariant ideal $I$ in the coordinate ring of $\bbC^{m \times n}$, we can write $I = I_{\lambda_1} + I_{\lambda_2} + \ldots + I_{\lambda_k}$ where $I_{\lambda_i}$ is the principal $\GL$-invariant ideal generated by $S_{\lambda_i} \bbC^n \otimes S_{\lambda_i} \bbC^m$. Furthermore, we assume $I_{\lambda_i}$ and $I_{\lambda_j}$ do not generate each other if $i \neq j$. Or equivalently, we assume $\lambda_i$ and $\lambda_j$ are not comparable if $i \neq j$. Then define $\cA(\lambda_i;n)$ inductively by
\begin{align*}
    \cA(\lambda_i;n) = \{ \bbD = (D_1,\ldots,D_r; \bbB) \mid & \bbD \text{ is a $\lambda_i$-admissible Dyck pattern, } l(\lambda_i(\bbD)) \leq n, \\
    & |D_i| \geq 3 \text{ for all } 1 \leq i \leq r, \nexists \bbB' \text{ and } 1 \leq j < i \text{ such that }  \\
    & (D_1,\ldots,D_r;\bbB') \in \cA(\lambda_j;n) \text{ and } \lambda_j(\bbB') = \lambda_i(\bbB) \}.
\end{align*}
We have the following equality in the Grothendieck group $K_0(\fg)$ of finite dimensional representations of $\fg$.
\begin{align}
\DS    [H_m(\tilde{\bR}(I))] = \sum_{i = 1}^k \sum_{\substack{\bbD \in \cA(\lambda_i;n) \\ b(\bbD) = m - |\lambda_i|}} [\bbL_{\lambda_i(\bbD)}].
\end{align}
\end{cor}
\end{comment}

\begin{rmk}
We can think of each of the $\fg$-modules $\bbL_{\lambda_i(\bbD)}$ as giving rise to a linear complex appearing as a subquotient in the minimal free resolution of $I$ via BGG correspondence. More precisely, $\bbL_{\lambda_i(\bbD)}$ corresponds to a linear complex that appears entirely within the row indexed by $|\lambda_i| + b(\bbD)$ of the Betti table, starting in column $d(\bbD)$.
\end{rmk}

We give an example of how to use this to compute Betti table of a $\GL$-invariant ideal $I_{\lambda}$. This example overlaps the one in \cite{RW}[Example 4.2].

\begin{exmp}
Consider $m=n=3$ and $\lambda = (3,2)$. The Dyck patterns in $\cA(\lambda = (3,2);3)$ are as follows (labelled by $\lambda(\bbD)$):

\begin{minipage}{.18\textwidth}
\centering
\begin{tikzpicture}[x=\unitsize,y=\unitsize,baseline=0]
\tikzset{vertex/.style={}}%
\tikzset{edge/.style={very thick}}%
\draw[dotted] (0,0) -- (10,0);
\draw[dotted] (0,2) -- (10,2);
\draw[dotted] (0,4) -- (10,4);
\draw[dotted] (0,6) -- (10,6);
\draw[dotted] (2,-2) -- (2,8);
\draw[dotted] (4,-2) -- (4,8);
\draw[dotted] (6,-2) -- (6,8);
\draw[dotted] (8,-2) -- (8,8);
\draw[edge] (2,0) -- (8,0);
\draw[edge] (2,2) -- (8,2);
\draw[edge] (2,4) -- (6,4);
\draw[edge] (2,0) -- (2,4);
\draw[edge] (4,0) -- (4,4);
\draw[edge] (6,0) -- (6,4);
\draw[edge] (8,0) -- (8,2);
\end{tikzpicture}%
\captionsetup{labelformat=empty}
\captionof{figure}{$(3,2)$}
\end{minipage}
\begin{minipage}{.18\textwidth}
\centering
\begin{tikzpicture}[x=\unitsize,y=\unitsize,baseline=0]
\tikzset{vertex/.style={}}%
\tikzset{edge/.style={very thick}}%
\draw[dotted] (0,0) -- (10,0);
\draw[dotted] (0,2) -- (10,2);
\draw[dotted] (0,4) -- (10,4);
\draw[dotted] (0,6) -- (10,6);
\draw[dotted] (2,-2) -- (2,8);
\draw[dotted] (4,-2) -- (4,8);
\draw[dotted] (6,-2) -- (6,8);
\draw[dotted] (8,-2) -- (8,8);
\draw[edge] (2,0) -- (8,0);
\draw[edge] (2,2) -- (8,2);
\draw[edge] (2,4) -- (6,4);
\draw[edge] (2,0) -- (2,4);
\draw[edge] (4,0) -- (4,4);
\draw[edge] (6,0) -- (6,4);
\draw[edge] (8,0) -- (8,2);
\draw[red, line width=5pt] (7,3) -- (9,3) -- (9,1) ;
\end{tikzpicture}%
\captionsetup{labelformat=empty}
\captionof{figure}{$(4,4)$}
\end{minipage}
\begin{minipage}{.18\textwidth}
\centering
\begin{tikzpicture}[x=\unitsize,y=\unitsize,baseline=0]
\tikzset{vertex/.style={}}%
\tikzset{edge/.style={very thick}}%
\draw[dotted] (0,0) -- (10,0);
\draw[dotted] (0,2) -- (10,2);
\draw[dotted] (0,4) -- (10,4);
\draw[dotted] (0,6) -- (10,6);
\draw[dotted] (2,-2) -- (2,8);
\draw[dotted] (4,-2) -- (4,8);
\draw[dotted] (6,-2) -- (6,8);
\draw[dotted] (8,-2) -- (8,8);
\draw[edge] (2,0) -- (8,0);
\draw[edge] (2,2) -- (8,2);
\draw[edge] (2,4) -- (6,4);
\draw[edge] (2,0) -- (2,4);
\draw[edge] (4,0) -- (4,4);
\draw[edge] (6,0) -- (6,4);
\draw[edge] (8,0) -- (8,2);
\draw[red, line width=5pt] (5,5) -- (7,5) -- (7,3);
\draw[fill=green] (3,5) circle [radius=0.3] ;
\end{tikzpicture}%
\captionsetup{labelformat=empty}
\captionof{figure}{$(3,3,3)$}
\end{minipage}
\begin{minipage}{.2\textwidth}
\centering
\begin{tikzpicture}[x=\unitsize,y=\unitsize,baseline=0]
\tikzset{vertex/.style={}}%
\tikzset{edge/.style={very thick}}%
\draw[dotted] (0,0) -- (10,0);
\draw[dotted] (0,2) -- (10,2);
\draw[dotted] (0,4) -- (10,4);
\draw[dotted] (0,6) -- (10,6);
\draw[dotted] (2,-2) -- (2,8);
\draw[dotted] (4,-2) -- (4,8);
\draw[dotted] (6,-2) -- (6,8);
\draw[dotted] (8,-2) -- (8,8);
\draw[edge] (2,0) -- (8,0);
\draw[edge] (2,2) -- (8,2);
\draw[edge] (2,4) -- (6,4);
\draw[edge] (2,0) -- (2,4);
\draw[edge] (4,0) -- (4,4);
\draw[edge] (6,0) -- (6,4);
\draw[edge] (8,0) -- (8,2);
\draw[red, line width=5pt] (5,5) -- (7,5) -- (7,3) -- (9,3) -- (9,1) ;
\draw[fill=green] (3,5) circle [radius=0.3] ;
\end{tikzpicture}%
\captionsetup{labelformat=empty}
\captionof{figure}{$(4,4,3)$}
\end{minipage}
\begin{minipage}{.2\textwidth}
\centering
\begin{tikzpicture}[x=\unitsize,y=\unitsize,baseline=0]
\tikzset{vertex/.style={}}%
\tikzset{edge/.style={very thick}}%
\draw[dotted] (0,0) -- (14,0);
\draw[dotted] (0,2) -- (14,2);
\draw[dotted] (0,4) -- (14,4);
\draw[dotted] (0,6) -- (14,6);
\draw[dotted] (2,-2) -- (2,8);
\draw[dotted] (4,-2) -- (4,8);
\draw[dotted] (6,-2) -- (6,8);
\draw[dotted] (8,-2) -- (8,8);
\draw[dotted] (10,-2) -- (10,8);
\draw[dotted] (12,-2) -- (12,8);
\draw[edge] (2,0) -- (8,0);
\draw[edge] (2,2) -- (8,2);
\draw[edge] (2,4) -- (6,4);
\draw[edge] (2,0) -- (2,4);
\draw[edge] (4,0) -- (4,4);
\draw[edge] (6,0) -- (6,4);
\draw[edge] (8,0) -- (8,2);
\draw[red, line width=5pt] (7,3) -- (9,3) -- (9,1);
\draw[red, line width=5pt] (7,5) -- (11,5) -- (11,1);
\draw[fill=green] (3,5) circle [radius=0.3] ;
\draw[fill=green] (5,5) circle [radius=0.3] ;
\end{tikzpicture}%
\captionsetup{labelformat=empty}
\captionof{figure}{$(5,5,5)$}
\end{minipage}

According to Theorem ~\ref{thm: main}, we have:
\begin{align*}
    & [H_5(\tilde{\bR}(I_{(3,2)}))] = [\bbL_{(3,2)}] + [\bbL_{(4,4)}] \\
    & [H_6(\tilde{\bR}(I_{(3,2)}))] = [\bbL_{(3,3,3)}] + [\bbL_{(4,4,3)}] \\
    & [H_7(\tilde{\bR}(I_{(3,2)}))] = [\bbL_{(5,5,5)}] 
\end{align*}
We can use \cite{SZ07}[Section 4] to compute the Hilbert series of the graded $E$-module $\bbL_{\mu}$: if we write $\mathrm{HS}_{\mu}(t)$ for the Hilbert series of the graded $E$-module $\bbL_{\mu}$ then 
\begin{align*}
    & \mathrm{HS}_{(3,2)}(t) = 225 t^5 + 1132 t^6 + 2673 t^7 + 3582 t^8 + 2785 t^9 + 1188 t^{10} + 225 t^{11},   \\
    & \mathrm{HS}_{(4,4)}(t) = 225 t^8 + 700 t^9 + 828 t^{10} + 450 t^{11} + 100 t^{12}, \\
    & \mathrm{HS}_{(3,3,3)}(t) = t^9, \quad \mathrm{HS}_{(4,4,3)}(t) = 9 t^{11} + 16 t^{12} + 9 t^{13}, \quad \mathrm{HS}_{(5,5,5)}(t) = t^{15}.
\end{align*}
We can also use the above computation to get the following betti table of $I_{(3,2)}$:
\begin{equation}\label{eq:betti-I32}
\begin{matrix}
     &0&1&2&3&4&5&6&7&8\\
     %\text{total:}&225&1132&2673&3808&3485&2025&691&109&1\\
     \text{5:}&225&
     1132&2673&3807&3485&2016&675&100&\text{.}\\\text{6:}&\text{.}&\text{.}&\text{.}&1&\text{.}&
     9&16&9&\text{.}\\\text{7:}&\text{.}&\text{.}&\text{.}&\text{.}&\text{.}&\text{.}&\text{.}&\text{.}&1\\
\end{matrix}
\end{equation}
\end{exmp}

\small \noindent Hang Huang, Department of Mathematics,
University of Wisconsin Madison, Madison, WI 53706 \\
{\tt hhuang235@math.wisc.edu}, \url{http://math.wisc.edu/~hhuang235/}

\end{document}